\documentclass[12pt,a4paper]{article}
\usepackage{a4wide,authblk }
\usepackage{theorem}
\usepackage{amsmath,amssymb,calc}
\usepackage[english]{babel}
\usepackage{graphicx,array}
\usepackage[numbers]{natbib}
\usepackage{tikz}
\usepackage{color} 
\usepackage{enumerate}
\usepackage[toc,page]{appendix}
\usepackage[charter]{mathdesign}
\usepackage{algorithm}
\usepackage{algpseudocode}
\usepackage{tikz,lipsum}
\usepackage[margin=1 cm]{caption}

\setlength{\unitlength}{1cm}


\newtheorem{theorem}{Theorem}
\newtheorem{assumption}[theorem]{Assumption}

\newtheorem{remark}[theorem]{Remark}
\newtheorem{proposition}[theorem]{Proposition}
\newtheorem{corollary}[theorem]{Corollary}

\newenvironment{proof}{\par\noindent\emph{Proof. }}{\hfill$\square$\par}
\numberwithin{equation}{section}
\providecommand{\href}[2]{#2}

\newcommand{\beq}{\begin{equation}}
	\newcommand{\eq}{\end{equation}}
\newcommand{\E}{\mathbb{E}}

\newcommand{\e}{{\rm e}}

\newcommand{\car}[3][0]{
	\begin{scope}[shift={#2}, rotate=#1, scale=0.04]
		\draw[#3,fill=#3](69.5,15.5)--(69.5,16.8)--(69.3,19.3)--(68.7,21.8)--(67.2,24.3)--(64.9,26.6)--(61.9,28.1)--(58.4,29.2)--(55.,29.4)--(52.7,29.2)--(50.6,28.8)--(43.1,28.9)--(41.9,30.9)--(40.5,31.)--(39.3,31.)--(39.5,29.)--(28.5,28.7)--(18.4,29.8)--(13.7,29.8)--(10.,29.5)--(2.8,27.9)--(1.7,26.9)--(0.4,23.)--(0.,15.5)--(0.,15.5)--(0.4,8.)--(1.7,4.1)--(2.8,3.1)--(10.,1.5)--(13.7,1.2)--(18.4,1.2)--(28.5,2.3)--(39.5,2.)--(39.3,0.)--(40.5,0.)--(41.9,0.1)--(43.1,2.1)--(50.6,2.2)--(52.7,1.8)--(55.,1.6)--(58.4,1.8)--(61.9,2.9)--(64.9,4.4)--(67.2,6.7)--(68.7,9.2)--(69.3,11.7)--(69.5,14.2)--(69.5,15.5);
		\draw[white,fill=white](53.,15.5)--(52.8,18.1)--(52.,20.4)--(51.,22.2)--(49.8,23.8)--(48.6,24.9)--(47.3,25.1)--(45.7,24.9)--(43.,24.6)--(40.2,23.6)--(39.,22.7)--(38.5,21.1)--(38.7,15.5)--(38.7,15.5)--(38.5,9.9)--(39.,8.3)--(40.2,7.4)--(43.,6.4)--(45.7,6.1)--(47.3,5.9)--(48.6,6.1)--(49.8,7.2)--(51.,8.8)--(52.,10.6)--(52.8,12.9)--(53.,15.5);
		\draw[white,fill=white](17.6,15.5)--(17.7,20.4)--(17.5,21.7)--(16.6,22.9)--(15.3,23.4)--(8.9,22.2)--(7.7,20.9)--(6.9,19.8)--(6.3,18.3)--(5.9,15.5)--(5.9,15.5)--(6.3,12.7)--(6.9,11.2)--(7.7,10.1)--(8.9,8.8)--(15.3,7.6)--(16.6,8.1)--(17.5,9.3)--(17.7,10.6)--(17.6,15.5);
		\draw[white,fill=white](64.5,25.3)--(65.3,24.3)--(66.4,23.2)--(67.1,22.1)--(67.5,20.8)--(66.2,20.8)--(65.2,21.5)--(64.2,22.8)--(63.7,24.2)--(63.5,26.)--(64.5,25.3);
		\draw[white,fill=white](63.5,5.7)--(62.5,5.)--(62.7,6.8)--(63.2,8.2)--(64.2,9.5)--(65.2,10.2)--(66.5,10.2)--(66.1,8.9)--(65.4,7.8)--(64.3,6.7)--(63.5,5.7);
	\end{scope}
}

\title{Optimal capacity allocation for heavy-traffic fixed-cycle traffic-light queues and intersections}

\author[1]{Marko Boon}
\author[1]{Guido Janssen}
\author[1,2]{Johan van Leeuwaarden}
\author[1]{Rik Timmerman}
\affil[1]{Eindhoven University of Technology}
\affil[2]{Tilburg University}

\providecommand{\keywords}[1]
{
	\small	
	\textbf{\textit{Keywords---}} #1
}

\begin{document}






\maketitle	
\abstract{Setting traffic light signals is a classical topic in traffic engineering, and important in heavy-traffic conditions when green times become scarce and longer queues are inevitably formed. For the fixed-cycle traffic-light queue, an elementary queueing model for one traffic light with cyclic signaling, we obtain heavy-traffic limits that capture the long-term queue behavior. We leverage the limit theorems to obtain sharp performance approximations for one queue in heavy traffic. We also consider optimization problems that aim for optimal division of green times among multiple conflicting traffic streams. We show that inserting heavy-traffic approximations leads to tractable optimization problems and close-to-optimal signal prescriptions. }%


\vspace{1cm}

\keywords{Queueing theory, heavy traffic, fixed-cycle traffic-light queue, capacity allocation problem, optimal signal settings}



%


\section{Introduction} \label{sec:intro}
We consider the problem of finding an optimal allocation of green times for conflicting traffic streams that arrive at the same intersection. This design problem is mathematically challenging due to the stochastic nature of traffic and the dimensionality that comes with multiple flows. We reduce the complexity of the problem by considering cyclic signaling, so that in each fixed cycle all streams receive some share of the available cycle time. This modeling assumption makes available the so-called fixed-cycle traffic-light (FCTL) queue, a classical queueing model for static and cyclic signaling~\cite{boon2019pollaczek,boon2018networks,darroch1964traffic,hagen1989comparison,oblakova2019exact,pacheco2017queues,van2006delay}

Optimizing traffic light settings is particularly relevant when the vehicle-to-capacity ratio approaches the maximal sustainable level. To deal with such scenarios, we establish heavy-traffic limit theorems for the FCTL queue that provide sharp performance approximations for \emph{one} queue in heavy traffic. We use these heavy-traffic approximations to approximatively solve the optimization problems that aim for optimal division of green times among multiple conflicting traffic streams. It turns out that the reduced complexity of the heavy-traffic approximations leads to more tractable optimization problems and close-to-optimal signal prescriptions. Our optimization problems are reminiscent of the so-called capacity allocation problem, originally formulated by Kleinrock in~\cite{kleinrock1976queueing} for dividing capacity among multiple independent $M/M/1$ queues, with the aim of minimizing the average waiting time in all queues. This optimization problem has an elegant explicit solution, and was later generalized by Wein in~\cite{wein1989capacity} for Jackson networks with product-form solutions. \cite{wein1989capacity}  solved the optimization problem by relaxing the original problem through insertion of classical heavy-traffic approximations. We adopt a similar approach, but need to deal with the specific challenges that come with considering FCTL queues rather than standard queues. Optimal green-time allocations have been considered before, e.g. by Webster~\cite{webster1958traffic} and several other studies that built on the results of Webster.


The heavy-traffic scenario that we consider lets the cycle length grow large while at the same time the load or vehicle-to-capacity ratio approaches 100\%. As far as we are aware, this is the first study that applies this scenario for the FCTL queue. Related scalings in continuous-time single-server queues are referred to as ``nearly-deterministic regime''~\cite{sigman2011heavyAP} and \cite{sigman2011heavyQS} and in multi-server settings as the Halfin-Whitt regime or Quality-and-Efficiency-Driven (QED) regime~\cite{halfin1981heavy} and \cite{van2019economies}. The term QED regime was coined because queueing systems in this regime can deal with high vehicle-to-capacity ratios while the probability of no delay stays strictly between 0 and 1. We show that similar favorable properties exist for the heavy-traffic FCTL queue. It seems that vehicle-actuated signaling strategies also exhibit the same favorable properties as is shown in~\cite{timmerman2020new} by means of simulation. The latter result points out that the framework in this paper is applicable for traffic signals in general and is not only restricted to static signaling.

To establish the FCTL heavy-traffic results, we use the transform expressions obtained in~\cite{boon2019pollaczek}. In particular, we use the contour-integral representation for the probability generating function (PGF) of the overflow queue. From this PGF, essentially all relevant information about the stationary behavior of the FCTL can be obtained, by taking derivatives at one for the moments, derivatives at zero for the distribution, and by using simple recursions to obtain the queue lengths at all moments within the cycle and the stationary delay distribution. Establishing scaling limits requires showing convergence of transforms, and proves more challenging. The main idea of our proof is to expand the integrand of the contour integral, and to show that in the heavy-traffic regime, only the first few terms of the expansion (up to leading order) dominate the numerical value of the integral. Making such observations rigorous, however, requires careful analysis. While this analysis is new, in classical queueing theory, establishing heavy-traffic results through the asymptotic evaluation of contour integrals was done  in e.g.~\cite{cohen1982single} and \cite{kingman1962queues} for single-server queues and in~\cite{janssen2015novel} for classical bulk-service queues.

In the heavy-traffic regime we consider, the scaled queue length turns out to converge to a reflected Gaussian random walk, a stochastic process that occurs in a range of other applications and that has been studied in great detail~\cite{blanchet2006complete,chang1997ladder,janssen2007lerch}. We exploit this connection to convert known results for the reflected Gaussian random walk into heavy-traffic approximations for the FCTL queue. These heavy-traffic approximations are considerably easier than the exact (contour-integral) expressions, which presents analytic advantages when considering the optimization problem of finding the optimal traffic light setting for intersections with cyclic arrangements of multiple conflicting traffic flows. The heavy-traffic approximations let us obtain closed-form expressions for such optimal settings.

Our main contributions can be summarized as follows:
\begin{itemize}
\item[(i)]   For the FCTL queue we obtain novel heavy-traffic limit theorems by asymptotic evaluation of contour integrals, showing that the scaled queue length converges to the reflected Gaussian random walk.
\item[(ii)]  We leverage the limit theorems to obtain sharp performance approximations for one queue in heavy traffic, utilizing existing results for the reflected Gaussian random walk.
\item[(iii)] We consider optimization problems that find the optimal division of green times among multiple conflicting traffic streams, and show that inserting heavy-traffic approximations leads to tractable optimization problems and close-to-optimal signal prescriptions.
\end{itemize}

The paper is organized as follows. In Section~\ref{sec:ht_fctl} we present the heavy-traffic analysis of the  FCTL queue.  By using the resulting heavy-traffic approximations, we present in Section~\ref{sec:opt_scaling} the optimal traffic-light settings for the situation of multiple conflicting traffic streams. Numerical examples are presented in Section~\ref{sec:numerics}. We present the main heavy-traffic proof in Section~\ref{sec:proofFCTLconvergence} and
conclude with a summary and topics for further research in Section~\ref{sec:con}. Remaining proofs are deferred to Appendix~\ref{sec:remaining}.

\section{FCTL queue in heavy traffic} \label{sec:ht_fctl}

The FCTL queue is a discrete-time queueing model aimed to capture the queueing dynamics at traffic lights at an intersection. 
A lane that is connected to the intersection has a dedicated traffic signal, alternating between red and green (the amber period is typically incorporated in one of these two periods). We divide time into slots of unit length, which can be interpreted as the interdeparture time of queued vehicles. From the viewpoint of one specific lane, the cycle is divided into a green period of $g$ time slots (allowing at most $g$ delayed vehicles to depart) and a red period of length $r$. The total cycle length is thus $c=g+r$ and is fixed, although the $g$ and $r$ for individual lanes may be different. Moreover, a green period for one lane implies a red period for all other lanes as mentioned before. During any of the green slots at a specific lane, the following may happen: if there is a queue of vehicles, exactly one vehicle may depart from the queue during that slot; if there is \emph{no} queue at the start of the slot, \emph{all} arriving vehicles during that slot may pass at full speed without any delay, which is usually referred to as the FCTL assumption. During a slot in the red period, all arriving vehicles join the queue in front of the traffic light. We further assume that the arrivals during each slot are independent and identically distributed (i.i.d.) and we denote their distribution with $Y$. We denote the mean number of arrivals during a slot with $\mu$ and its standard deviation with $\sigma$. We define $\rho$ to be the vehicle-to-capacity ratio or the saturation level of a lane, which satisfies $\rho = \mu c / g$.

The queue length process at the end of the green period gives rise to a Lindley-type recursion that we know from the basic theory on the single-server $GI/G/1$ queue. To see this, denote with $X_{g}^{(k)}$ the queue length at the end of the green period in cycle $k$ and with $Y_{k}$ the number of arriving delayed vehicles in between the end of the $k^\textrm{th}$ and the end of the $(k+1)^\textrm{th}$ green period. This gives the recursion, see e.g.~\cite{boon2019pollaczek},
\begin{equation}\label{eq:lindley}
	X_{g}^{(k+1)} = \max\{0, X_{g}^{(k)} + Y_k - g\}.
\end{equation}
Observe that \eqref{eq:lindley} is not a standard Lindley recursion, due to the FCTL assumption and hence the intricate dependency between the \emph{delayed} arrivals $Y_k$ and $X_g^{(k)}$.

We shall focus on the limiting queue length $X_g:=\lim_{k\to\infty}X_g^{(k)}$, which is well defined assuming $\rho<1$. We refer to $X_g$ as the \emph{overflow queue}. The probability generating function of $X_g$ was first obtained in~\cite{darroch1964traffic}, and recently an alternative expression has been derived in \cite[Theorem 1]{boon2019pollaczek}. The latter expression allows us to establish a heavy-traffic limit theorem for the overflow queue.
We consider a heavy-traffic regime that  connects  the cycle length and the green period
according to
\begin{equation}\label{eq:ht_scaling}
	g = \mu c +\beta \sigma \sqrt{c}.
\end{equation}
Here $\beta>0$ is a parameter that can be chosen freely,
and optimal choices for $\beta$ will be obtained in Section~\ref{sec:opt_scaling}.
The main intuition for considering the regime in Equation~\eqref{eq:ht_scaling} is as follows. In heavy traffic there will be many delayed cars, and during each cycle $g$
 delayed cars can depart while
on average $\mu c$ new delayed cars will arrive.
 We therefore choose the green period as roughly $c\mu$, but add $\beta \sigma \sqrt{c}$ to account for variability of the number of newly arriving cars. Observe that for large cycles, $\beta \sigma \sqrt{c}$ will be considerably smaller than $c\mu$. In the heavy-traffic regime we consider, $c$ will be large, $c\mu$ is the dominant term strictly required for stability, while $\beta \sigma \sqrt{c}$ is a hedge against uncertainty. To understand the effect of this hedge,  substitute \eqref{eq:ht_scaling} into \eqref{eq:lindley} to obtain $X_{g}^{(k+1)}  = \max\{0,X_{g}^{(k)}+Y_k - \mu c -\beta\sigma\sqrt{c}\}$. After dividing the term $Y_k-\mu c-\beta\sigma\sqrt{c}$ by the standard deviation of the number of arrivals per cycle, $\sigma \sqrt{c}$, we expect it to be approximately normally distributed (with mean $-\beta$ and unit variance) when $c$ grows large because of the Central Limit Theorem (CLT). This is not entirely straightforward, because $Y_k$ cannot be interpreted as the sum of $c$ independent random variables, and hence the CLT cannot be applied directly. We therefore resort to the transform method. We take the expression for the
 probability generating function of $X_g$ established in~\cite{boon2019pollaczek}, and show that this transform converges in the heavy-traffic regime \eqref{eq:ht_scaling} with $c\to\infty$ to the transform of a non-degenerate random variable
$M_\beta$. The convergence of transforms then implies the convergence of the underlying random variables. Here, $M_\beta$ is a special random variable equal to the all-time maximum of the so-called Gaussian random walk with drift $-\beta$ and variance 1, see e.g.~\cite{janssen2007lerch} for a detailed study of various characteristics of $M_\beta$, including expression and approximations for all moments. We will give more details on $M_\beta$ later, and first present our main heavy-traffic limit theorem. Let $\stackrel{d}{\to}$ denote convergence in distribution.



\begin{theorem}[Heavy-traffic limit theorem]\label{thm:ht_FCTL}
	Assume that $\mathbb{E}[z^Y]$ is analytic within a disk of radius $R$ with $R>1$, and $\mu<1$, $\sigma^2>0$. Under scaling \eqref{eq:ht_scaling}, as $c\to\infty$,
	\begin{equation}\label{eq:conv_Xg}
		\frac{1}{\sigma\sqrt{c}}X_{g}\stackrel{d}{\to} M_\beta,
	\end{equation}
		\begin{equation}\label{eq:conv_prob}
		\mathbb{P}\left(\frac{X_g}{\sigma\sqrt{c}}=0\right) = \mathbb{P}(M_\beta=0)\left(1+O\left(\frac{1}{\sqrt{c}}\right)\right)
	\end{equation}
	and for $k\geq 1$,
	\begin{equation}\label{eq:conv_moments}
		\mathbb{E}[X_{g}^k] = \left(\sigma\sqrt{c}\right)^k\mathbb{E}[M_{\beta}^k]\left(1+O\left(\frac{1}{\sqrt{c}}\right)\right).
	\end{equation}
\end{theorem}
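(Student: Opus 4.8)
The plan is to begin from the Pollaczek-type contour-integral expression for the PGF $\mathbb{E}[z^{X_g}]$ established in \cite[Theorem 1]{boon2019pollaczek}, substitute the heavy-traffic scaling \eqref{eq:ht_scaling}, and carry out an asymptotic evaluation of the integral as $c\to\infty$. Because $X_g/(\sigma\sqrt{c})$ is not lattice-valued, I would track the convergence through Laplace transforms: putting $z=\mathrm{e}^{-s/(\sigma\sqrt{c})}$ for $s\geq 0$ identifies $\mathbb{E}[z^{X_g}]$ with $\mathbb{E}[\mathrm{e}^{-sX_g/(\sigma\sqrt{c})}]$, and the first goal is to show that for each fixed $s\geq 0$ this converges to $\mathbb{E}[\mathrm{e}^{-sM_\beta}]$, the Pollaczek-type transform of the all-time maximum of the Gaussian random walk with drift $-\beta$ and unit variance (see \cite{janssen2007lerch}). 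Pointwise convergence of Laplace transforms on $[0,\infty)$ toward a limit continuous at the origin then yields \eqref{eq:conv_Xg} by the continuity theorem.

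The analytic core is the asymptotics of the contour integral. Its integrand is the logarithm of a (normalized) symbol of the recursion \eqref{eq:lindley}; in the heavy-traffic-dominant regime in which the queue does not clear during a green period this symbol is, up to normalization and FCTL-correction factors, $w^{g}-\mathbb{E}[w^{Y}]^{c}$ in the integration variable $w$, integrated against a Cauchy-type kernel over a contour around the unit circle. I would split the contour into a shrinking arc about $w=1$ and its complement. On the complement, the hypotheses $\mu<1$, $\sigma^{2}>0$ and analyticity of $\mathbb{E}[w^{Y}]$ inside a disk of radius $R>1$ supply a uniform bound making that portion negligible. On the local arc I substitute $w=\mathrm{e}^{u/\sqrt{c}}$ and invoke the cumulant expansion $\log\mathbb{E}[\mathrm{e}^{uY/\sqrt{c}}]=\mu u/\sqrt{c}+\tfrac12\sigma^{2}u^{2}/c+O(c^{-3/2})$, legitimate because $R>1$ guarantees finiteness of the moment generating function of $Y$ near the origin. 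Together with $w^{-g}=\mathrm{e}^{-\mu u\sqrt{c}-\beta\sigma u}$ coming from \eqref{eq:ht_scaling}, the $\mu u\sqrt{c}$ terms cancel---this cancellation is exactly what the scaling \eqref{eq:ht_scaling} is designed to produce---leaving $1-\mathrm{e}^{-\beta\sigma u+\sigma^{2}u^{2}/2}$, which after rescaling $u\mapsto u/\sigma$ is precisely the symbol of the $N(-\beta,1)$ random walk; the Cauchy kernel and $\mathrm{d}w$ contribute the compensating factors $\sqrt{c}$ and $1/\sqrt{c}$. A dominated-convergence argument on the local arc, with an explicit integrable majorant, then identifies the limiting integral with the Pollaczek representation of $\mathbb{E}[\mathrm{e}^{-sM_\beta}]$. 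Retaining the third-cumulant term $O(c^{-3/2})$ and integrating it produces an $O(1/\sqrt{c})$ correction factor; evaluating the same representation at $z=0$, where it reduces to $\mathbb{P}(X_g=0)=\exp(\cdots)$, and near the scaling point then gives \eqref{eq:conv_prob} and the transform estimate underlying \eqref{eq:conv_moments}.

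For \eqref{eq:conv_moments} one further ingredient is needed beyond transform convergence, namely uniform integrability of $\bigl(X_g/(\sigma\sqrt{c})\bigr)^{k}$. I would get this from stochastic domination: since the delayed arrivals $Y_k$ in \eqref{eq:lindley} never exceed the total arrivals over a cycle, $X_g$ lies stochastically below the stationary content of a genuine Lindley recursion whose increments have mean $\mu c-g=-\beta\sigma\sqrt{c}$ and, thanks to $R>1$, exponential moments. The Cram\'er root of that recursion is $\asymp 2\beta/(\sigma\sqrt{c})$, so $X_g/(\sigma\sqrt{c})$ has an exponential tail whose rate stays bounded away from $0$ uniformly in $c$, giving uniform integrability of every power. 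Combined with the $O(1/\sqrt{c})$-accurate expansion of the transform near the origin---valid also for complex argument by the same contour analysis---Cauchy's estimates then upgrade convergence of the moments to the stated relative error, using also that $\mathbb{E}[M_\beta^k]>0$.

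The step I expect to be the main obstacle is the rigorous bookkeeping of the contour integral: making the split into local and non-local parts uniform in the transform variable, controlling the genuine FCTL-correction factors in the exact symbol of \cite{boon2019pollaczek} so as to show they do not contribute at leading order, and---above all---pushing every expansion one order further than weak convergence requires, so as to certify the $O(1/\sqrt{c})$ error terms rather than a mere $o(1)$.
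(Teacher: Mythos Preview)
Your proposal follows essentially the same route as the paper: start from the Pollaczek contour integral for the PGF, pass to the transform variable $w=\exp(t/(\sigma\sqrt{c}))$, and evaluate the integral asymptotically near $z=1$ to recover the Pollaczek representation of $\mathbb{E}[e^{tM_\beta}]$. The execution differs in two respects worth noting. First, the paper makes the local analysis precise by a dedicated saddle-point method taken from \cite{janssen2015novel}: it places the integration contour at radius $z_{sp}$, the unique saddle of $h(z)=-\ln z+(c/g)\ln Y(z)$, and substitutes along the exact steepest-descent curve $z(x)$ solving $h(z(x))=h(z_{sp})-\tfrac12x^2h''(z_{sp})$, so that $Y^c(z)/z^g$ becomes an exact Gaussian in $x$. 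Your direct substitution $z=e^{u/\sqrt{c}}$ with cumulant expansion is the same computation in different coordinates, but the paper's contour choice $|z|=z_{sp}$ is precisely what makes the non-local part exponentially small; on an arbitrary circle $|z|=1+\varepsilon$ this is not automatic, so your ``uniform bound on the complement'' tacitly forces $\varepsilon\sim\beta/(\sigma\sqrt{c})$, i.e.\ the saddle-point radius. Second, for \eqref{eq:conv_moments} the paper simply observes that the MGF estimate $X_g(e^{t/(\sigma\sqrt{c})})=\mathbb{E}[e^{tM_\beta}](1+O(1/\sqrt{c}))$ holds uniformly for $t$ in a complex disk, and then reads off $m_k(c)=m_k(1+O(1/\sqrt{c}))$ directly from Cauchy's integral formula for derivatives. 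Your uniform-integrability argument via stochastic domination by the bulk-service queue is correct but superfluous: once you have the uniform transform estimate on a complex disk---which your own proposal sketches---Cauchy already delivers the rate with no probabilistic tail input needed.
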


The proof is deferred to Section~\ref{sec:proofFCTLconvergence}. Theorem~\ref{thm:ht_FCTL}  has two practical implications. First, since the scaled overflow queue $X_g$ converges to a non-degenerate limiting variable, the scaling rule \eqref{eq:ht_scaling} can serve as a guiding principle for choosing the cycle length as a function of traffic pressure. That is, since there exists a non-degenerate limit, scaling rules that let $g$ scale faster (e.g.~$g = \mu c +\beta \sigma {c}^{2/3}$ or $g = (\mu+\beta) c$) or slower (e.g.~$g = \mu c +\beta \sigma {c}^{1/3}$ or $g = \mu c+\beta$) likely lead to degenerate behavior in the large cycle limit $c\to\infty$, that is $X_g$ converges with high probability to either $0$ or $\infty$.
The second practical implication is that known results for the limit $M_\beta$ can be converted into approximations for $X_g$. As Theorem~\ref{thm:ht_FCTL} suggests, for large enough $c$,
\begin{align}
&\mathbb{E}[X_{g}] \approx \sigma\sqrt{c}\,
\mathbb{E}[M_\beta],\quad
\mathbb{P}(X_g=0)\approx \mathbb{P}(M_\beta=0).
\end{align}
 Let $\zeta(.)$ denote the Riemann zeta function. For $0<\beta<2\sqrt{\pi}$ it was shown in ~\cite{janssen2007lerch} that
\begin{align}
\label{eq:approxEM}
&\mathbb{E}[{M}_\beta]= \frac{1}{2\beta}+\frac{\zeta(1/2)}{\sqrt{2\pi}}+\frac{\beta}{4}+\frac{\beta^2}{\sqrt{2\pi}}\sum_{r=0}^{\infty}\frac{\zeta(-1/2-r)}{r!(2r+1)(2r+2)}\left(\frac{-\beta^2}{2 }\right)^r,\\
&\mathbb{P}(M_\beta=0)= \sqrt{2}\beta\exp\left\{\frac{\beta}{\sqrt{2\pi}}\sum_{r=0}^{\infty}\frac{\zeta(1/2-r)}{r!(2r+1)}\left(\frac{-\beta^2}{2}\right)^r\right\}.\label{eq:approxP0}
\end{align}
These expressions give heavy-traffic approximations for the overflow queue that are accurate when $\beta$ is small and $c$ is sufficiently large. Expression~\eqref{eq:approxEM} also reveals that for small $\beta$,
 $\mathbb{E}[M_\beta] \approx 1/(2\beta)$, a particularly easy approximation that will be helpful when we optimize signal settings later in the paper.

We also derive other approximations for $\mathbb{E}[X_g]$ and $\mathbb{P}(X_g=0)$ that are more accurate, in particular for smaller $c$ and larger $\beta$. Let us introduce the integrals
\begin{align}
G_0(b) & =\int_{0}^\infty \frac{t^2}{b^2+t^2}\frac{e^{-b^2-t^2}}{1-e^{-b^2-t^2}}\mathrm{d}t ,\label{eq:G0}\\
G_1(b) & =\int_{0}^\infty \frac{e^{-b^2-t^2}}{1-e^{-b^2-t^2}}\mathrm{d}t,
\end{align}
that can be computed numerically by standard software packages. In addition,~\cite{janssen2015novel}, Equations (4.27), (4.29) and (4.31), provide $\zeta$-series such as in Equations~\eqref{eq:approxEM} and \eqref{eq:approxP0}, for $G_0(b)$ and $G_1(b)$ as well as rapidly convergent series involving the standard Gaussian and the complementary error function. One consequence of the results that is of the latter type is the series representation
\begin{equation}\label{eq:G0prime(b)}
    G_{0}^\prime(b) = -\sqrt{\pi} \sum_{k=0}^\infty \int_{b\sqrt{k+1}}^\infty e^{-t^2} \mathrm{d}t
\end{equation}
that shows that $G_{0}^\prime(b)$ is negative and strictly increasing in $b>0$, which will be used later on.
We prove the following result in Appendix~\ref{sec:remaining}.
\begin{proposition}[Refined heavy-traffic approximations]\label{thm:mean}
The mean overflow queue satisfies, as $c\to\infty$,
\begin{align}
	\mathbb{E}[X_{g}] & = \frac{\sqrt{2}}{\pi}\left(\sigma\sqrt{c}+\frac{\beta\sigma^2}{2\mu}\right)G_0(b(\beta))+\frac{\theta\beta}{\pi}G_{1}\left(\frac{\beta}{\sqrt{2}}\right)+O\left(\frac{1}{\sqrt{c}}\right), \label{eq:int_meansqrtc}
\end{align}
where
\begin{align}
b(\beta) & = \frac{\beta}{\sqrt{2}}\left(1+\frac{\beta\sigma}{\mu\sqrt{c}}\right)^{-1/2},\\
a & = \frac{\mu_3-\mu^3-3(1+\mu)\sigma^2}{\mu},\label{eq:a}\\
\theta & = \frac{\sigma^2}{\mu\sqrt{2}}\left(\frac{\mu}{\sigma^2}+\frac{1}{3}\left(\frac{\mu}{\sigma^2}\right)^2a-1\right)\label{eq:theta}
\end{align}
with $\mu_3$  the third moment of $Y$.
\end{proposition}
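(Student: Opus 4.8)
\noindent\emph{Proof sketch.}
The starting point is the exact contour-integral representation of the probability generating function $\Pi(z) = \mathbb{E}[z^{X_g}]$ obtained in \cite[Theorem 1]{boon2019pollaczek}, a Pollaczek-type integral over a circle slightly outside the unit circle. Since $\Pi(1) = 1$, differentiating this representation with respect to $z$ and setting $z=1$ yields an exact contour-integral expression for $\mathbb{E}[X_g] = \Pi'(1)$, with an integrand involving $g$, $\rho$ and the transform $\mathbb{E}[z^Y]$. Theorem~\ref{thm:ht_FCTL} already gives $\mathbb{E}[X_g] = \sigma\sqrt c\,\mathbb{E}[M_\beta]\bigl(1 + O(1/\sqrt c)\bigr)$, which pins down $\mathbb{E}[X_g]$ only up to an $O(1)$ error; Proposition~\ref{thm:mean} makes the $O(1)$ term explicit, i.e.\ improves the error to $O(1/\sqrt c)$. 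The plan is thus to evaluate the contour integral asymptotically along the lines of the proof of Theorem~\ref{thm:ht_FCTL} in Section~\ref{sec:proofFCTLconvergence}, but retaining one further order.

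I would localize the contour near its dominant point $w = 1$ and rescale the integration variable by $\sqrt c$, so that after Taylor-expanding $\log\mathbb{E}[w^Y]$ about $w = 1$ the leading exponential factor of the integrand takes the shape $\exp(-(b^2 + t^2))$ in the rescaled variable $t$; writing $1/(1 - \mathrm{e}^{-b^2 - t^2}) = \sum_{k\ge1}\mathrm{e}^{-k(b^2 + t^2)}$ then produces precisely the kernels defining $G_0$ and $G_1$. Two features of the bookkeeping deserve emphasis. First, I would not expand the load $\rho = (1 + \beta\sigma/(\mu\sqrt c))^{-1}$, so that the natural argument of $G_0$ is $b(\beta) = \tfrac{\beta}{\sqrt2}\sqrt\rho$ with its residual $c$-dependence; keeping $b(\beta)$ exact absorbs the $O(\sqrt c\cdot 1/\sqrt c) = O(1)$ correction produced by expanding the prefactor, and is what accounts for the combination $\sigma\sqrt c + \beta\sigma^2/(2\mu)$ in \eqref{eq:int_meansqrtc}. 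Second, the third moment of $Y$ enters only at the next order of the expansion of $\log\mathbb{E}[w^Y]$; combined with the FCTL-specific parts of the integrand (notably the factor $z^g$) it is packaged into the constants $a$ in \eqref{eq:a} and $\theta$ in \eqref{eq:theta}, and ends up multiplying the $G_1$-integral at the limiting argument $\beta/\sqrt2$. Collecting the two surviving orders yields the $O(\sqrt c)$ term $\tfrac{\sqrt2}{\pi}\sigma\sqrt c\,G_0(b(\beta))$ — consistent with Theorem~\ref{thm:ht_FCTL} since $\tfrac{\sqrt2}{\pi}G_0(\beta/\sqrt2) = \mathbb{E}[M_\beta]$, which one reads off from the coinciding $\zeta$-series of \cite{janssen2007lerch} and \cite{janssen2015novel} — together with the $O(1)$ terms $\tfrac{\sqrt2}{\pi}\cdot\tfrac{\beta\sigma^2}{2\mu}G_0(b(\beta))$ and $\tfrac{\theta\beta}{\pi}G_1(\beta/\sqrt2)$, all other contributions being $O(1/\sqrt c)$.

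The main obstacle is the rigorous error control, not the formal expansion. Uniformly in $c$ one must show that the portion of the contour bounded away from the dominant point contributes only $O(1/\sqrt c)$ (in fact exponentially little), and that the integrated remainders of the Taylor expansion of $\log\mathbb{E}[w^Y]$ and of the other analytic factors of the integrand are $O(1/\sqrt c)$. Both rely on the hypothesis that $\mathbb{E}[z^Y]$ is analytic in a disk of radius $R > 1$: this yields Cauchy-type bounds on the derivatives entering the remainders, uniformly on the relevant contour, and it controls the kernel $1/(1 - \mathrm{e}^{-(\cdots)})$ away from its singularity. Differentiating under the integral sign at $z = 1$ and interchanging $\sum_{k\ge1}$ with the integral to reach $G_0$ and $G_1$ also need dominated-convergence arguments, but these are routine given the analyticity. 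Once these estimates are in place, \eqref{eq:int_meansqrtc} follows by identifying the resulting real integrals with \eqref{eq:G0} and \eqref{eq:G0prime(b)}.
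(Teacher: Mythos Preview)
Your high-level strategy---push the contour-integral asymptotics one order beyond Theorem~\ref{thm:ht_FCTL}---is sound, but the paper's route is quite different and much shorter. Rather than differentiating the PGF representation~\eqref{pk1}, the paper starts from the direct contour integral for $\mathbb{E}[X_g]$ given in \cite[Eq.~(3.1)]{boon2019pollaczek}, applies the same steepest-descent substitution $z=z(x)$ used in Section~\ref{sec:proofFCTLconvergence}, and then observes that the resulting real integral coincides \emph{exactly} with the bulk-service-queue integral already treated in \cite[Sections~4 and 5.1]{janssen2015novel}, once one rewrites the FCTL scaling $g=c\mu+\beta\sigma\sqrt c$ in BSQ form via $\gamma=\beta\sigma\mu^{-1/2}(1+\beta\sigma/(\mu\sqrt c))^{-1/2}$. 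The paper can therefore simply quote \cite[Eq.~(5.14)]{janssen2015novel}, which expresses the answer as a combination of $G_0$, $G_3$, $G_4$ with explicit coefficients $C_1,\dots,C_4$; all that remains is to compute the $C_i$ for the FCTL parameters and to use the identities $C_1+C_3=C_4/(2b_0^2)$, $C_2+b_0^2C_3=-C_4$, $G_3+G_4=G_2/(2b_0^2)$, $G_0+G_2=G_1$ to collapse the expression to the $G_0$--$G_1$ form of~\eqref{eq:int_meansqrtc}. Your direct expansion would eventually reach the same endpoint but amounts to re-deriving the relevant portion of \cite{janssen2015novel} from scratch.

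Two points in your sketch would need repair if you carried it through. First, the localization is not at $1$ but at the saddle point $z_{sp}=1+\beta/(\sigma\sqrt c)+O(1/c)$ of $h(z)=-\ln z+(c/g)\ln Y(z)$; this distinction is immaterial at leading order, but at the $O(1)$ level the offset $z_{sp}-1$ contributes, and expanding about $1$ rather than about $z_{sp}$ (followed by the steepest-descent parametrization) scrambles the orders and makes the bookkeeping much harder. Second, your account of how $G_1$ appears is too quick: the next-order contributions naturally produce the auxiliary integrals $G_3$ and $G_4$ (with kernels $t^2(b^2+t^2)^{-2}$ and $t^2(b^2+t^2)^{-1}e^{-b^2-t^2}(1-e^{-b^2-t^2})^{-2}$), and it is only the algebraic cancellations listed above that reduce them to a single $G_1$ term with coefficient $\theta\beta/\pi$. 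Without that step the result is considerably messier than~\eqref{eq:int_meansqrtc}.
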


A direct consequence of  Proposition~\ref{thm:mean} is the slightly easier approximation
 \begin{align}
	\mathbb{E}[X_{g}] &= \frac{\sqrt{2}}{\pi}\sigma\sqrt{c}\,G_0\left(\frac{\beta}{\sqrt{2}}\right)+O(1).\label{eq:int_mean1}
\end{align}
 Tables~\ref{t:numBeta0.1} and \ref{t:numBeta1} show the asymptotic approximations we have just derived. As expected, the approximations become more accurate for larger $c$. However, the approximations also serve as useful, somewhat looser approximations for small and moderate $c$-values.
In conclusion, we have derived two asymptotic approximations for $\mathbb{E}[X_g]$, the first-order approximation \eqref{eq:int_mean1} with error $O(1)$, and the refined approximation \eqref{eq:int_meansqrtc}  with error $O(1/\sqrt{c})$. Although both approximations perform well, already for small values of $c$, \eqref{eq:int_meansqrtc} is more accurate than \eqref{eq:int_mean1} for values of $\beta$ as large as 2. Both approximations will be employed in the next section for the purpose of solving optimization problems.

\begin{table}[h!]
{\small\begin{center}
\begin{tabular}{rrrrrrrr}\hline
    & &   \multicolumn{3}{c}{$\mathbb{P}(X_{g} = 0)$}       & \multicolumn{3}{c}{$\mathbb{E}[X_{g}]$} \\ \cline{3-4} \cline{6-8}
$g$ & $c$ & true value  & Appr.~\eqref{eq:approxP0}  &  & true value  & Appr.~\eqref{eq:int_mean1}      & Appr.~\eqref{eq:int_meansqrtc}
\\ \hline
$10$    & 32.3  &  0.1649        & 0.1334   & & 13.935 & 13.826  & 13.985 \\
$20$    & 65.2  &  0.1551        & 0.1334   & & 19.767 & 19.644  & 19.803               \\
$30$    & 98.2  &  0.1509        & 0.1334   & & 24.238 & 24.109  & 24.267               \\
$50$    & 164.3 &  0.1468        & 0.1334   & & 31.324 & 31.188  & 31.346               \\
$100$   & 330.0 &  0.1427        & 0.1334   & & 44.340 & 44.198  & 44.356               \\ \hline
\end{tabular}
\end{center}
\caption{\small{Exact results for $\mathbb{P}(X_g=0)$ and $\mathbb{E}[X_g]$ for several values of $g$ and $c$ with Poisson arrivals with mean $0.3$ in each slot and with $\beta=0.1$.}}\label{t:numBeta0.1}}
\end{table}

\begin{table}[h!]
{\small\begin{center}
\begin{tabular}{rrrrrrrr}\hline
    & &   \multicolumn{3}{c}{$\mathbb{P}(X_{g} = 0)$}       & \multicolumn{3}{c}{$\mathbb{E}[X_{g}]$} \\ \cline{3-4} \cline{6-8}
$g$ & $c$ & true value  & Appr.~\eqref{eq:approxP0}  &  & true value  & Appr.~\eqref{eq:int_mean1}      & Appr.~\eqref{eq:int_meansqrtc} \\\hline
$10$    & 24.3  & 0.8450   &  0.8005        & & 0.3944 & 0.3414  & 0.4437              \\
$20$    & 53.3  & 0.8312   &  0.8005        & & 0.5664 & 0.5055  & 0.5996               \\
$30$    & 83.3  & 0.8253   &  0.8005        & & 0.6960 & 0.6319  & 0.7225               \\
$50$    & 144.7 & 0.8200   &  0.8005        & & 0.8998 & 0.8326  & 0.9199               \\
$100$   & 301.6 & 0.8138   &  0.8005        & & 1.2722 & 1.2021  & 1.2860               \\ \hline
\end{tabular}
\end{center}
\caption{\small{Exact results for $\mathbb{P}(X_g=0)$ and $\mathbb{E}[X_g]$ for several values of $g$ and $c$ with Poisson arrivals with mean $0.3$ in each slot and with $\beta=1$.}}\label{t:numBeta1}}
\end{table}

\section{Capacity allocation problems}\label{sec:opt_scaling}

We now turn to optimal green-time allocations for an intersection with  $n$ lanes, where each lane is modeled separately as an FCTL queue. Let $\mu_i$ denote the mean arrival rate at lane $i$ and $g_i$ the green time allocated to lane $i$ within one cycle of length $c$.  While the lanes operate independently once the green times are fixed, they do depend on each other through the cycle time $c$ and the green time of one lane corresponds to a red period for the other lanes. We leverage this independence across lanes and the asymptotic approximations developed in Section~\ref{sec:ht_fctl} to formulate several optimization problems that search for the vector of green times that minimizes the total expected overflow queue.



\subsection{Minimizing the sum of overflows} \label{sec:unweighted}

Consider the problem of finding the green times that minimize the sum of the mean queue lengths at the end of the green periods $\sum_{i=1}^n\mathbb{E}[X_{g_i,i}]$.
Assume that $c$ is fixed, and let $r_T<c$ represent the time that cannot be used as green time. This $r_T$ could model e.g. clearing times between lanes. Hence, $c=r_T+\sum_{i=1}^n g_i$.
Again applying the substitution as in~\eqref{eq:ht_scaling}, $g_i = \mu_i c +\beta_i \sigma_i \sqrt{c}$, for $i=1,\ldots,n$, this gives the following optimization problem:

\begin{equation}\label{eq:prob1}
\begin{aligned}
& \underset{\beta_1,\ldots,\beta_n}{\text{minimize}} && \sum_{i=1}^n \mathbb{E}[X_{g_i,i}]\\
& \text{subject to}&& \sum_{i=1}^n \beta_i\sigma_i\sqrt{c} = c(1- \mu_T)-r_T;\\
& && \beta_i> 0, \; i = 1, \ldots,n,
\end{aligned}
\end{equation}
with $\mu_T=\sum_{i=1}^n \mu_i$. The first constraint in~\eqref{eq:prob1} relates to the requirement $c=r_T+\sum_{i=1}^n g_i$ and together with the constraints $\beta_i>0$ for all $i$, it is ensured that each $g_i$ might be chosen so as to ensure a vehicle-to-capacity ratio less than 1 for each lane as $c(1-\mu_T)-r_T>0$.

Optimization problem~\eqref{eq:prob1} seems mathematically intractable due to the lack of an explicit expression for the objective function $\sum_{i=1}^n \mathbb{E}[X_{g_i,i}]$. We shall therefore use approximations based on Equations~\eqref{eq:int_mean1} and \eqref{eq:int_meansqrtc} to replace the objective function with a heavy-traffic approximation, which then leads to a tractable, more structured optimization problem.


Using \eqref{eq:int_mean1} gives the following optimization problem:
\begin{equation}\label{eq:approach1a}
\begin{aligned}
& \underset{\beta_1,\ldots,\beta_n}{\text{minimize}} && \sum_{i=1}^n \frac{\sigma_i}{\pi} \sqrt{2c}\,G_0(\beta_i/\sqrt{2}) \\
& \text{subject to}&& \sum_{i=1}^n \beta_i\sigma_i\sqrt{c} = c(1- \mu_T)-r_T;\\
& && \beta_i> 0, \; i = 1, \ldots,n.
\end{aligned}
\end{equation}
\begin{proposition}\label{thm:noCosts}
 Optimization problem~\eqref{eq:approach1a} is solved by
	\begin{equation}\label{eq:sol1}
	\beta_i = \frac{c(1-\mu_T)-r_T}{\sqrt{c}\sum_{j=1}^n \sigma_j}=:\beta_{*}.
	\end{equation}
\end{proposition}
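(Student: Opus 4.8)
The plan is to recognize \eqref{eq:approach1a} as a strictly convex program with a single affine equality constraint and solve it by Lagrange multipliers. The objective $\sum_{i=1}^n \frac{\sigma_i}{\pi}\sqrt{2c}\,G_0(\beta_i/\sqrt2)$ is separable, and each summand is a positive multiple of $\beta_i\mapsto G_0(\beta_i/\sqrt2)$. The excerpt has already recorded, via \eqref{eq:G0prime(b)}, that $G_0'$ is negative and strictly increasing on $(0,\infty)$; hence $G_0''>0$ there, so each summand is strictly convex. The feasible set is the intersection of the hyperplane $\sum_i \beta_i\sigma_i\sqrt c = c(1-\mu_T)-r_T$ with the open positive orthant, which is convex and (since $c(1-\mu_T)-r_T>0$ by the standing assumption) nonempty. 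Thus any feasible stationary point of the Lagrangian is the unique global minimizer.

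Before invoking a Lagrangian with only the equality constraint, I would argue that the minimizer is interior, so the inequalities $\beta_i>0$ are inactive. The point is that $G_0(b)\to\infty$ as $b\downarrow 0$: near $t=0$ the integrand in \eqref{eq:G0} behaves like $t^2/(b^2+t^2)^2$, whose integral over a neighbourhood of $0$ diverges as $b\downarrow 0$. Consequently, along any sequence of feasible points for which some coordinate tends to $0$, the objective tends to $+\infty$; by continuity the infimum over the feasible set is attained at a point all of whose coordinates are bounded away from $0$, i.e. in the interior of the feasible region.

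I would then form $L(\boldsymbol\beta,\lambda)=\sum_{i=1}^n \frac{\sigma_i}{\pi}\sqrt{2c}\,G_0(\beta_i/\sqrt2)+\lambda\bigl(\sum_{i=1}^n \beta_i\sigma_i\sqrt c-(c(1-\mu_T)-r_T)\bigr)$ and set $\partial L/\partial\beta_i=0$. The $\sigma_i\sqrt c$ factors cancel, leaving $G_0'(\beta_i/\sqrt2)=-\pi\lambda$ for every $i$, with the right-hand side independent of $i$. Since $G_0'$ is strictly increasing it is injective, so $\beta_i/\sqrt2$ — hence $\beta_i$ — is the same for all $i$; denote the common value by $\beta_*$. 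Substituting into the equality constraint gives $\beta_*\sqrt c\,\sum_{j=1}^n\sigma_j=c(1-\mu_T)-r_T$, which is precisely \eqref{eq:sol1}, and this $\beta_*$ is strictly positive because the numerator is positive and each $\sigma_j>0$, so it is feasible. By strict convexity this feasible KKT point is the unique global minimizer, which proves the proposition.

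The only step requiring genuine care — the rest being routine convex optimization — is the interiority argument, i.e. verifying the blow-up of $G_0$ at $0$ so that the strict inequalities $\beta_i>0$ can be dropped and the equality-constrained Lagrangian used; once that is in place, the monotonicity of $G_0'$ stated just above the proposition delivers equality of the $\beta_i$ immediately.
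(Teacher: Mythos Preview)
Your proposal is correct and follows essentially the same route as the paper: introduce a Lagrange multiplier for the single affine constraint, observe that the first-order conditions reduce to $G_0'(\beta_i/\sqrt{2})$ being equal to a quantity independent of $i$, invoke the strict monotonicity of $G_0'$ from \eqref{eq:G0prime(b)} to conclude that all $\beta_i$ coincide, and read off the common value from the constraint. Your version is in fact more complete than the paper's, which carries out the Lagrangian computation but does not explicitly address existence of a minimizer, interiority (the blow-up of $G_0$ at $0$), or uniqueness via strict convexity; these additions are correct and welcome.
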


\begin{proof}
	Introduce the Lagrange multiplier $\lambda_0\in\mathbb{R}$, so that
	\begin{equation}
		\frac{\partial}{\partial \beta_i}\left(\sum_{j=1}^n \frac{\sigma_j\sqrt{2c}}{\pi} G_0(\beta_j/\sqrt{2})\right)=\lambda_0 \frac{\partial}{\partial \beta_i}\left(\sum_{j=1}^n \beta_j\sigma_j\sqrt{c} - c(1- \mu_T)+r_T\right),
	\end{equation}
	for $i=1,...,n$.
	This gives
	\begin{equation}\label{eq:lagrange}
		G_{0}^\prime (\beta_i/\sqrt{2})=\pi\lambda_0.
	\end{equation}
	The function $G_{0}^\prime(b)$ is negative and strictly increasing in $b>0$, see Equation~\eqref{eq:G0prime(b)}. Combining this with the fact that  $\lambda_0$ is independent of the index $i$, we conclude that the $\beta_i$ are the same for $i=1,...,n$ 
	and should satisfy
	\begin{align}
		& \beta_i \sqrt{c}\sum_{j=1}^n \sigma_j = c(1- \mu_T)-r_T,
	\end{align}
	for $i=1,...,n$, which completes the proof.
\end{proof}

	Proposition~\ref{thm:noCosts} shows that the optimal parameters $\beta_i$ should be equal for all lanes. 
We now turn to the second approximation for the problem formulated in Equation~\eqref{eq:prob1}, based on the refined heavy-traffic approximation in \eqref{eq:int_meansqrtc}:
\begin{equation}\label{eq:approach1b}
\begin{aligned}
& \underset{\beta_1,\ldots,\beta_n}{\text{minimize}} &&\sum_{i=1}^n \frac{\sqrt{2}}{\pi}\left(\sigma_i\sqrt{c}+\frac{\beta_i\sigma_{i}^2}{2\mu_i}\right)G_0(b_i(\beta_i))+\frac{\theta_i\beta_i}{\pi}G_{1}\left(\frac{\beta_i}{\sqrt{2}}\right) \\
& \text{subject to}&& \sum_{i=1}^n \beta_i\sigma_i\sqrt{c} = c(1- \mu_T)-r_T;\\
& && \beta_i> 0, \; i = 1, \ldots n.
\end{aligned}
\end{equation}
\begin{theorem}\label{thm:noCostsHigherOrder}
	Optimization problem~\eqref{eq:approach1b} is solved by
	\begin{equation}\label{dimrule}
		\beta_i=\beta_* + \Omega_i(\beta_*),	\quad i=1,\ldots,n,
	\end{equation}
	with $\beta_*$ as in \eqref{eq:sol1},
		\begin{equation}
		\Omega_i(\beta_*)= \sqrt{\frac{2}{c}}\frac{1}{G_{0}^{\prime\prime}\left(\beta_*/\sqrt{2}\right)}\left(\frac{\sum_{j=1}^n K_{j}}{\sum_{j=1}^n\sigma_j}-\frac{K_{i}}{\sigma_i}\right),
	\end{equation}
	 and
	\begin{align}
		K_{i} = & \frac{\sigma_{i}^2}{\sqrt{2}\mu_i}G_0\left(\frac{\beta_*}{\sqrt{2}}\right)-\frac{\beta_*\sigma_{i}^2}{2\mu_i}G_{0}^\prime\left(\frac{\beta_*}{\sqrt{2}}\right)-\frac{\beta_{*}^2\sigma_{i}^2}{2\sqrt{2}\mu_i}G_{0}^{\prime\prime}\left(\frac{\beta_*}{\sqrt{2}}\right)+\\&\theta_i G_{1}\left(\frac{\beta_*}{\sqrt{2}}\right)+\frac{\theta_i\beta_*}{\sqrt{2}}G_{1}^\prime\left(\frac{\beta_*}{\sqrt{2}}\right).\label{eq:k0i}
	\end{align}
\end{theorem}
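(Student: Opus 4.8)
The plan is to set up the Lagrangian for problem~\eqref{eq:approach1b} and perturb around the solution $\beta_*$ of the first-order problem~\eqref{eq:approach1a}. Writing the objective as $\sum_i F_i(\beta_i)$ with $F_i(\beta_i)=\frac{\sqrt 2}{\pi}(\sigma_i\sqrt c+\frac{\beta_i\sigma_i^2}{2\mu_i})G_0(b_i(\beta_i))+\frac{\theta_i\beta_i}{\pi}G_1(\beta_i/\sqrt2)$, the stationarity conditions are $F_i'(\beta_i)=\lambda\,\sigma_i\sqrt c$ for a single multiplier $\lambda$, together with the linear constraint $\sum_i\beta_i\sigma_i\sqrt c=c(1-\mu_T)-r_T$. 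The key observation is that $b_i(\beta_i)=\frac{\beta_i}{\sqrt2}(1+\frac{\beta_i\sigma_i}{\mu_i\sqrt c})^{-1/2}=\frac{\beta_i}{\sqrt2}-\frac{\beta_i^2\sigma_i}{2\sqrt2\,\mu_i\sqrt c}+O(1/c)$, so every $\beta_i$-dependence beyond the leading $G_0(\beta_i/\sqrt2)\sigma_i\sqrt c$ term is of order $\sqrt c$ smaller than the leading term, and the whole correction to the first-order problem is $O(\sqrt c)$ while the leading part is $O(c)$. This suggests the ansatz $\beta_i=\beta_*+\Omega_i$ with $\Omega_i=O(1/\sqrt c)$, which is exactly the form claimed.

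First I would expand $F_i'(\beta_i)$ around $\beta_*$. The leading contribution is $\frac{\sqrt2}{\pi}\sigma_i\sqrt c\cdot\frac{1}{\sqrt2}G_0'(\beta_i/\sqrt2)=\frac{\sigma_i\sqrt c}{\pi}G_0'(\beta_i/\sqrt2)$; all other terms of $F_i'$ (the derivative hitting the $\frac{\beta_i\sigma_i^2}{2\mu_i}$ prefactor, the derivative of $b_i$ expanded via the chain rule, and the $\theta_i$-terms) are collectively $O(1)$ in $c$ and, evaluated at $\beta_*$, I would package them into exactly the quantity $\frac{1}{\pi}K_i$ appearing in~\eqref{eq:k0i} --- one checks that $\frac{\partial}{\partial\beta_i}[(\sigma_i\sqrt c+\frac{\beta_i\sigma_i^2}{2\mu_i})G_0(b_i(\beta_i))]$ at $\beta_i=\beta_*$, after substituting the expansion of $b_i$ and its derivative and discarding the $O(1/\sqrt c)$ remainder, produces the three $G_0$-terms of $K_i$, while the $\theta_i$-part of $F_i$ differentiates to the two $G_1$-terms. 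Then I would Taylor-expand the leading term: $G_0'(\beta_i/\sqrt2)=G_0'(\beta_*/\sqrt2)+\frac{\Omega_i}{\sqrt2}G_0''(\beta_*/\sqrt2)+O(\Omega_i^2)$. Collecting, the stationarity condition becomes
\begin{equation}
\frac{\sigma_i\sqrt c}{\pi}\Bigl(G_0'(\beta_*/\sqrt2)+\tfrac{\Omega_i}{\sqrt2}G_0''(\beta_*/\sqrt2)\Bigr)+\frac{K_i}{\pi}=\lambda\,\sigma_i\sqrt c+o(1).
\end{equation}
Dividing by $\sigma_i\sqrt c/\pi$ and writing $\tilde\lambda=\pi\lambda-G_0'(\beta_*/\sqrt2)$, this gives $\frac{\Omega_i}{\sqrt2}G_0''(\beta_*/\sqrt2)=\tilde\lambda-\frac{K_i}{\sigma_i\sqrt c}$, i.e.
\begin{equation}
\Omega_i=\frac{\sqrt2}{G_0''(\beta_*/\sqrt2)}\Bigl(\tilde\lambda-\frac{K_i}{\sigma_i\sqrt c}\Bigr).
\end{equation}
Finally I would pin down $\tilde\lambda$ from the constraint: since $\sum_i\sigma_i\sqrt c\,\beta_*=c(1-\mu_T)-r_T$ already holds by definition of $\beta_*$, the perturbations must satisfy $\sum_i\sigma_i\Omega_i=0$, which forces $\tilde\lambda=\frac{1}{\sum_j\sigma_j}\sum_j\frac{K_j}{\sigma_j\sqrt c}\cdot\frac{\sqrt c}{\sqrt c}$... more precisely $\tilde\lambda=\frac{1}{\sqrt c\sum_j\sigma_j}\sum_j K_j$, and substituting back yields $\Omega_i=\sqrt{\frac2c}\,\frac{1}{G_0''(\beta_*/\sqrt2)}\bigl(\frac{\sum_j K_j}{\sum_j\sigma_j}-\frac{K_i}{\sigma_i}\bigr)$, which is precisely~\eqref{dimrule}.

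The main obstacle is the bookkeeping in the expansion of $F_i'(\beta_i)$: one must verify carefully that substituting the expansion $b_i(\beta_i)=\beta_i/\sqrt2-\frac{\beta_i^2\sigma_i}{2\sqrt2\mu_i\sqrt c}+O(1/c)$ (and the corresponding expansion of $b_i'(\beta_i)$) into $\frac{d}{d\beta_i}[(\sigma_i\sqrt c+\frac{\beta_i\sigma_i^2}{2\mu_i})G_0(b_i(\beta_i))]$ and collecting terms by order in $c$ really reproduces the three $G_0$-terms of $K_i$ in~\eqref{eq:k0i} with the stated coefficients, with a genuinely $O(1/\sqrt c)$ remainder --- this requires keeping track of the $\sqrt c\cdot(1/\sqrt c)=O(1)$ cross terms while dropping $\sqrt c\cdot(1/c)$ and $1\cdot(1/\sqrt c)$ terms. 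I would also need to confirm that the second-order conditions hold, i.e. that the critical point is a genuine minimum: since $G_0''>0$ (differentiate~\eqref{eq:G0prime(b)}: $G_0'$ is strictly increasing), the leading Hessian $\mathrm{diag}(\frac{\sigma_i}{\sqrt2\pi}\sqrt c\,G_0''(\beta_*/\sqrt2))$ is positive definite on the constraint hyperplane, and the $O(1)$ corrections do not destroy definiteness for large $c$, so~\eqref{eq:approach1b} is (asymptotically) convex near $\beta_*$ and the stationary point is the minimizer. A minor point to state explicitly is that $\beta_i=\beta_*+\Omega_i(\beta_*)>0$ for $c$ large, which is immediate since $\Omega_i=O(1/\sqrt c)$ and $\beta_*>0$ is bounded away from $0$ under the standing assumption $c(1-\mu_T)-r_T>0$.
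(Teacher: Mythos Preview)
Your proposal is correct and follows essentially the same approach as the paper: set up the Lagrangian, expand $b_i(\beta_i)$ and $F_i'(\beta_i)$ in powers of $1/\sqrt{c}$ to isolate the leading term $\frac{\sigma_i\sqrt c}{\pi}G_0'(\beta_i/\sqrt2)$ and the $O(1)$ correction $K_i/\pi$, Taylor-expand $G_0'$ around $\beta_*$, and use the constraint to eliminate the multiplier. Your $\tilde\lambda$ is precisely the paper's $\pi(\lambda_1-\lambda_0)$; the paper additionally points to the second-order/positivity issues only implicitly, so your explicit remarks there are a small plus.
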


The proof of Theorem~\ref{thm:noCostsHigherOrder} is presented in Appendix~\ref{sec:remaining}. The result may seem complicated at first glance, but in fact reveals a remarkably elegant structure. The term $\Omega_i(\beta_*)$ can be thought of as a refinement of $\beta_*$, due to using the refined approximation \eqref{eq:int_meansqrtc} instead of \eqref{eq:int_mean1}.
An intriguing finding is that $\Omega_i(\beta_*)$ can be written explicitly in terms of $\beta_*$.  From that perspective, the rule in \eqref{dimrule} can be interpreted as a two-step procedure. First divide the green time into parts of length $
	g_i = \mu_i c +\beta_* \sigma_i \sqrt{c},
$
and then correct or refine this fair division using  $\beta_*+\Omega_i(\beta_*)$ instead of $\beta_*$.
Note that in this second step, lane $i$ gets a larger or smaller share depending on the sign of
	\begin{equation}
	\frac{\sum_{j=1}^n K_{j}}{\sum_{j=1}^n\sigma_j}-\frac{K_{i}}{\sigma_i}.
	\end{equation}

Generally, the solution to optimization problem \eqref{eq:approach1b} will lead to more accurate results for the minimization of $\sum_{i=1}^n \mathbb{E}[X_{g_i,i}]$ than the solution to the optimization problem \eqref{eq:approach1a}, as the approximation of the individual $\mathbb{E}[X_{g_i,i}]$ terms is more accurate. We return to this observation in Section~\ref{sec:numerics}, Example~1.

\begin{remark}\label{rem:nonintegeroptimal}
	The solutions of both optimization problems formulated above generally result in non-integer values for $g_i$. Depending on the exact setting, we might opt for rounding the values to the nearest integer or rounding the value of $g_i$ down (along with checking for stability). An alternative procedure is to allow for a random green time. If $G_i$ denotes such a random green time, we can choose it in the following way: $G_i$ is equal to $\lfloor g_i \rfloor$ with probability $p$ and equal to $\lceil g_i\rceil$ with probability $1-p$ such that $g_i = p\lfloor g_i \rfloor+(1-p)\lceil g_i\rceil$. We show how this can be accounted for in Remark~\ref{rem:noninteger}.
\end{remark}

\subsection{Minimizing the weighted sum of overflows}\label{sec:weighted}

In practice, it might be preferred to give priority to certain lanes. This might be modeled by introducing weights associated with each lane. Therefore, we assume next that lane $i$ gets weight $d_i>0$ and formulate the optimization problem
\begin{equation}\label{eq:prob2}
\begin{aligned}
& \underset{\beta_1,\ldots,\beta_n}{\text{minimize}} && \sum_{i=1}^n d_i\mathbb{E}[X_{g_i,i}]\\
& \text{subject to}&& \sum_{i=1}^n \beta_i\sigma_i\sqrt{c} = c(1-\mu_T)-r_T;\\
& && \beta_i> 0, \; i = 1, \ldots,n.
\end{aligned}
\end{equation}	
Due to the weights $d_1,\ldots,d_n$ we cannot (approximately) solve the problem~\eqref{eq:prob1} with the heavy-traffic approximations in \eqref{eq:approach1a} and \eqref{eq:approach1b}.
We therefore resort to the approximation $\mathbb{E}[X_{g_i,i}]\approx \sigma_i\sqrt{c}/(2\beta_i)$ as derived in \eqref{eq:approxEM}, and solve the problem
\begin{equation}\label{eq:approach2a}
\begin{aligned}
& \underset{\beta_1,\ldots,\beta_n}{\text{minimize}} && \sum_{i=1}^n d_i\frac{\sqrt{c}\sigma_i}{2\beta_i}\\
& \text{subject to}&& \sum_{i=1}^n \beta_i\sigma_i\sqrt{c} = c(1-\mu_T)-r_T;\\
& && \beta_i > 0, \; i = 1, \ldots,n.
\end{aligned}
\end{equation}	

\begin{proposition}\label{thm:costsExplicit}
	Optimization problem~\eqref{eq:approach2a} is solved by
	\begin{equation}\label{eq:sol2}
	\beta_i = \frac{\sqrt{d_i}(c(1-\mu_T)-r_T)}{\sqrt{c}\sum_{j=1}^n\sqrt{d_j}\sigma_j}.
	\end{equation}
\end{proposition}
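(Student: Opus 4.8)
The plan is to treat \eqref{eq:approach2a} as a convex program. The objective $\sum_{i=1}^n d_i\sqrt{c}\,\sigma_i/(2\beta_i)$ is a sum of strictly convex functions of the positive variables $\beta_i$, the single equality constraint is linear, and the region $\{\beta_i>0\}$ is open and convex; moreover, because $c(1-\mu_T)-r_T>0$ the feasible set is nonempty and has a relative interior point. Consequently a feasible stationary point of the Lagrangian will automatically be the unique global minimizer, so it suffices to exhibit one.

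First I would introduce a Lagrange multiplier $\lambda$ for the equality constraint and write the first-order conditions for $\sum_{j=1}^n d_j\sqrt{c}\,\sigma_j/(2\beta_j)-\lambda\bigl(\sum_{j=1}^n\beta_j\sigma_j\sqrt{c}-c(1-\mu_T)+r_T\bigr)$, namely $-d_i\sqrt{c}\,\sigma_i/(2\beta_i^2)=\lambda\sigma_i\sqrt{c}$ for each $i$. These force $\lambda<0$ and give $\beta_i^2=-d_i/(2\lambda)$, i.e. $\beta_i=\kappa\sqrt{d_i}$ for a single constant $\kappa>0$ independent of $i$. Substituting $\beta_i=\kappa\sqrt{d_i}$ into $\sum_{i=1}^n\beta_i\sigma_i\sqrt{c}=c(1-\mu_T)-r_T$ determines $\kappa=(c(1-\mu_T)-r_T)/(\sqrt{c}\sum_{j=1}^n\sqrt{d_j}\sigma_j)$, which is exactly \eqref{eq:sol2}. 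Since every $d_i>0$, the resulting $\beta_i$ are all strictly positive, the inequality constraints are inactive, and by convexity this point is the global minimum.

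Alternatively — and this delivers optimality with no appeal to Lagrange/KKT theory — I would argue directly by Cauchy--Schwarz: for any feasible $\beta$,
\[
\Bigl(\sum_{i=1}^n\sqrt{d_i}\,\sigma_i\Bigr)^2=\Bigl(\sum_{i=1}^n\sqrt{\tfrac{d_i\sigma_i}{\beta_i}}\cdot\sqrt{\beta_i\sigma_i}\Bigr)^2\le\Bigl(\sum_{i=1}^n\tfrac{d_i\sigma_i}{\beta_i}\Bigr)\Bigl(\sum_{i=1}^n\beta_i\sigma_i\Bigr),
\]
and since $\sum_{i=1}^n\beta_i\sigma_i=(c(1-\mu_T)-r_T)/\sqrt{c}$ is fixed over the feasible set, this lower-bounds the objective $\tfrac{\sqrt{c}}{2}\sum_{i=1}^n d_i\sigma_i/\beta_i$ by a constant, with equality exactly when $d_i/\beta_i^2$ is independent of $i$, i.e. $\beta_i\propto\sqrt{d_i}$; combined with the constraint this pins down \eqref{eq:sol2}. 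There is essentially no serious obstacle: the only points that need a line of care are that $c(1-\mu_T)-r_T>0$ guarantees the proposed solution is feasible with all $\beta_i>0$, and that strict convexity (equivalently, the equality case of Cauchy--Schwarz) yields uniqueness of the minimizer.
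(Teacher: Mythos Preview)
Your primary Lagrange-multiplier argument is correct and is exactly the paper's approach: the paper's entire proof is the one-line remark that the result ``follows from the same Lagrange multiplier technique as in the proof of Proposition~\ref{thm:noCosts}'', so you in fact supply more detail (the convexity/uniqueness discussion) than the original. The Cauchy--Schwarz alternative is a clean, self-contained addition not present in the paper.
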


\begin{proof}
Follows from the same Lagrange multiplier technique as in the proof of Proposition~\ref{thm:noCosts}.
\end{proof}

Notice that Equation~\eqref{eq:sol2} reduces to  Equation~\eqref{eq:sol1} for  $d_i=1$.
We next use a more accurate approximation for the $\mathbb{E}[X_{g_i,i}]$ and define the following minimization problem.
\begin{equation}\label{eq:approach2b}
\begin{aligned}
& \underset{\beta_1,\ldots,\beta_n}{\text{minimize}} && \sum_{i=1}^n d_i \frac{\sigma_i}{\pi}\sqrt{2c}\,G_0(\beta_i/\sqrt{2})\\
& \text{subject to}&& \sum_{i=1}^n \beta_i\sigma_i\sqrt{c} = c(1-\mu_T)-r_T;\\
& && \beta_i> 0, \; i = 1, \ldots,n.
\end{aligned}
\end{equation}

\begin{corollary} \label{cor:weigthedNumerical}
	There exists a unique solution to optimization problem~\eqref{eq:approach2b}.\label{thm:numericalMinSum}
\end{corollary}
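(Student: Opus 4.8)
The plan is to recast the minimization as a convex program and invoke the standard existence/uniqueness theory for strictly convex objectives over a closed convex feasible set. First I would observe that the feasible region $\{(\beta_1,\ldots,\beta_n): \sum_i \beta_i\sigma_i\sqrt{c}=c(1-\mu_T)-r_T,\ \beta_i>0\}$ is the relative interior of a bounded simplex-like polytope (bounded because each $\beta_i$ is positive and their $\sigma_i\sqrt c$-weighted sum is a fixed positive constant). The objective $F(\beta_1,\ldots,\beta_n)=\sum_{i=1}^n d_i\frac{\sigma_i}{\pi}\sqrt{2c}\,G_0(\beta_i/\sqrt2)$ is separable, so its Hessian is diagonal with entries proportional to $G_0''(\beta_i/\sqrt2)$. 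Hence the key analytic input is the sign of $G_0''$.

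The main step is therefore to establish that $G_0''(b)>0$ for $b>0$, i.e. that $G_0$ is strictly convex on $(0,\infty)$. The excerpt already records in \eqref{eq:G0prime(b)} the series representation $G_0'(b)=-\sqrt{\pi}\sum_{k=0}^\infty \int_{b\sqrt{k+1}}^\infty e^{-t^2}\,\mathrm{d}t$, and states that $G_0'$ is negative and strictly increasing in $b>0$. Strictly increasing is exactly $G_0''>0$; to make this self-contained I would differentiate the series term by term (justified by local uniform convergence, since each tail integral is bounded by $\tfrac{1}{2}\sqrt\pi$ and the Gaussian tail decays fast enough in $k$), obtaining $G_0''(b)=\sqrt\pi\sum_{k=0}^\infty \sqrt{k+1}\,e^{-b^2(k+1)}>0$ for every $b>0$. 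This shows each summand $b\mapsto G_0(b/\sqrt2)$ is strictly convex, so $F$ is strictly convex on the (convex) feasible set.

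With strict convexity in hand, uniqueness is immediate: if two minimizers existed, the midpoint would give a strictly smaller value, a contradiction. For existence I would argue that although the open constraint $\beta_i>0$ makes the feasible set non-closed, the objective is continuous and blows up — or at least stays bounded below while the boundary is harmless — so one can restrict to a compact sublevel set. Concretely, pick any feasible point $\bar\beta$ (e.g. $\beta_*$ from \eqref{eq:sol1}); the sublevel set $\{F\le F(\bar\beta)\}$ intersected with the affine constraint is closed and bounded, and I must check it does not run into the boundary $\beta_i=0$. Since $G_0(b)\to+\infty$ as $b\downarrow 0$ (the integrand $\frac{t^2}{b^2+t^2}\cdot\frac{e^{-b^2-t^2}}{1-e^{-b^2-t^2}}$ behaves like $t^2/t^2\cdot e^{-t^2}/(1-e^{-t^2})\sim 1/t^2$ near $t=0$ when $b=0$, making $G_0(0)=+\infty$), each term $d_i\frac{\sigma_i}{\pi}\sqrt{2c}\,G_0(\beta_i/\sqrt2)\to+\infty$ as $\beta_i\downarrow0$ while the others stay bounded below by $0$; hence the sublevel set is bounded away from the boundary and is genuinely compact in the open region. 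Weierstrass then yields a minimizer, and strict convexity makes it unique.

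I expect the main obstacle to be the rigorous justification of the two limiting statements feeding into existence: term-by-term differentiation of the \eqref{eq:G0prime(b)} series to get $G_0''>0$ (routine but needs a dominating bound), and the divergence $G_0(b)\to\infty$ as $b\downarrow0$, which requires a careful small-$t$ estimate of the integrand in \eqref{eq:G0} to show the integral is not merely large but infinite in the limit — coercivity of the objective on the feasible set hinges entirely on this.
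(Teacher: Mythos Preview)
Your argument is correct and, in fact, more complete than the paper's own proof. The paper proceeds via Lagrange multipliers: from the first-order conditions it obtains $G_0'(\beta_j/\sqrt{2})=\pi\lambda_0/(d_j\sqrt{c})$, then invokes the strict monotonicity of $G_0'$ (stated after \eqref{eq:G0prime(b)}) to argue that each $\beta_j$ is uniquely determined as a function of $\lambda_0$, and declares the problem ``solvable.'' It does not explicitly justify why the resulting critical point is a minimizer, nor does it verify existence by a coercivity or compactness argument.

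Your route---strict convexity of $G_0$ from $G_0''>0$ (obtained by term-by-term differentiation of \eqref{eq:G0prime(b)}), plus coercivity from $G_0(b)\to\infty$ as $b\downarrow 0$---is the natural convex-analysis packaging of the same analytic fact (monotonicity of $G_0'$), and it delivers both existence and uniqueness cleanly without having to argue separately that the Lagrange stationary point is a global minimum. The paper's approach is shorter and gives, as a by-product, the explicit first-order system one would hand to a numerical solver; your approach is self-contained and rigorous but does not display that system. Either is acceptable here.
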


\begin{proof}
Along the same lines as in the proof of Proposition~\ref{thm:noCosts}, we get that there exists a Lagrange multiplier $\lambda_0\in\mathbb{R}$ satisfying
\begin{equation}\label{eq:proofCor_exp_beta}
G_{0}^\prime (\beta_j/\sqrt{2})=\frac{\pi\lambda_0}{d_j\sqrt{c}}.
\end{equation}
As $G_{0}^\prime(b)$ is a strictly increasing function in $b$, it is invertible and thus Equation~\eqref{eq:proofCor_exp_beta} can be solved for $\beta_j$. This implies that a Lagrange multiplier $\lambda_0$ exists and that the problem formulated in Equation~\eqref{eq:approach2b} is solvable.
\end{proof}

While the minimization problem in \eqref{eq:approach2b} cannot be solved analytically, Corollary~\ref{cor:weigthedNumerical} implies that a numerical solution can readily be found. The optimization problem as formulated in Equation~\eqref{eq:approach2b} is a constrained convex optimization problem for which standard numerical solvers exist. One could, e.g., use Interior Point methods to find the optimal $\beta_{i}$'s.

\section{Numerical examples of capacity allocation}\label{sec:numerics}

We now demonstrate the capacity allocation procedures developed in Section~\ref{sec:opt_scaling}, that in turn also use the asymptotic approximations for the mean overflow established in Section~\ref{sec:ht_fctl}.
In particular, the first-order approximation  \eqref{eq:int_mean1} and the refined approximation \eqref{eq:int_meansqrtc}  were both used to solve capacity allocation problems in the asymptotic regime where cycle times become large. This led to asymptotic dimensioning rules that prescribe how to divide the cycle time over the various lanes, and in particular how to choose the green time in an (asymptotically) optimal manner. Because the capacity allocation problems in Section~\ref{sec:opt_scaling} were solved analytically, we have conducted many numerical experiments for assessing the effectiveness of the asymptotic results, for various cycle lengths and distributional assumptions on the arrival processes. From this, we concluded that the asymptotic dimensioning rules perform well, also for settings with a small or moderate cycle length and/or relatively small vehicle-to-capacity ratios. We shall now substantiate these findings by discussing two examples in more detail.

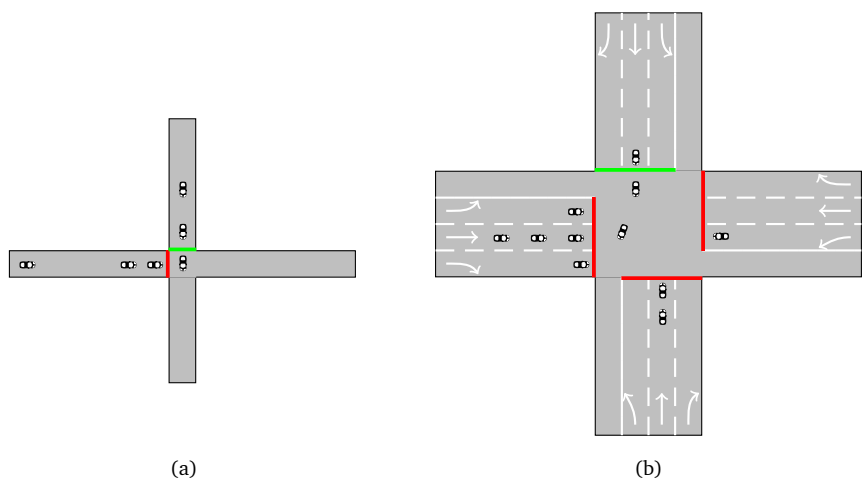
\begin{figure}[ht]
\centering
\begin{tabular}{cc}
\begin{tikzpicture}[scale=0.07]
\draw[white,fill=white](30,10) rectangle (35,0);
\draw[black,fill=lightgray](30,10) rectangle (35,60);

\draw[black,fill=lightgray](0,30) rectangle (65,35);
\draw[lightgray,fill=lightgray](30,30) rectangle (35,35);
\draw[red,fill=red](29.5,30) rectangle (30,35);
\draw[green,fill=green](30,35) rectangle (35,35.5);

  \car{(2,31.7)}{black}
  \car{(21,31.7)}{black}
  \car{(26,31.7)}{black}

  \car[270]{(32, 48)}{black}
  \car[270]{(32, 40)}{black}
  \car[270]{(32, 34)}{black}
\end{tikzpicture} \hspace{0.5 cm}
& \begin{tikzpicture}[scale=0.07]
  \draw[black,fill=lightgray](30,00) rectangle (50,80);

  \draw[black,fill=lightgray](0,30) rectangle (80,50);
  \draw[thick,white,dash pattern=on 7 off 4](0,35) to (40,35);
  \draw[thick,white,dash pattern=on 7 off 4](0,40) to (40,40);
  \draw[thick,white,dash pattern=on 7 off 0](0,45) to (40,45);
  \draw[thick,white,dash pattern=on 7 off 4](40,45) to (80,45);
  \draw[thick,white,dash pattern=on 7 off 4](40,40) to (80,40);
  \draw[thick,white,dash pattern=on 7 off 0](40,35) to (80,35);
  \draw[thick,white,dash pattern=on 7 off 4](35,40) to (35,80);
  \draw[thick,white,dash pattern=on 7 off 4](40,40) to (40,80);
  \draw[thick,white,dash pattern=on 7 off 0](45,40) to (45,80);
  \draw[thick,white,dash pattern=on 7 off 0](35,0) to (35,40);
  \draw[thick,white,dash pattern=on 7 off 4](40,0) to (40,40);
  \draw[thick,white,dash pattern=on 7 off 4](45,0) to (45,40);

  \draw[lightgray,fill=lightgray](30,30) rectangle (50,50);
  \draw[red,fill=red](29.5,30) rectangle (30,45);
  \draw[red,fill=red](50.5,50) rectangle (50,35);
  \draw[red,fill=red](50,29.5) rectangle (35,30);
  \draw[green,fill=green](30,50) rectangle (45,50.5);

  \car{(26,31.7)}{black}
  \car{(11,36.7)}{black}
  \car{(18,36.7)}{black}
  \car{(25,36.7)}{black}
  \car{(25,41.7)}{black}
  \car[90]{(43.3,26)}{black}
  \car[90]{(43.3,21)}{black}

  \car[270]{(37, 48)}{black}
  \car[250]{(35, 40)}{black}
  \car[270]{(37, 54)}{black}
  \car[180]{(55, 38.3)}{black}

  \draw[white,thick,->] (37.5,78) to (37.5,72);
  \draw[white,thick,->] (32.5,78) to [out = 270, in = 30] (30.5,72);
  \draw[white,thick,->] (42.5,78) to [out = 270, in = 150] (44.5,72);
  \draw[white,->,thick](2,32.5) to [out = 0, in = 120] (8,30.5);
  \draw[white,->,thick](2,37.5) to [out = 0, in = 180] (8,37.5);
  \draw[white,->,thick](2,42.5) to [out = 0, in = 240] (8,44.5);

  \draw[white,thick,->] (42.5,2) to (42.5,8);
  \draw[white,thick,->] (37.5,2) to [out = 90, in = 300] (35.5,8);
  \draw[white,thick,->] (47.5,2) to [out = 90, in = 210] (49.5,8);
  \draw[white,->,thick](78,37.5) to [out = 180, in = 30] (72,35.5);
  \draw[white,->,thick](78,42.5) to [out = 180, in = 0] (72,42.5);
  \draw[white,->,thick](78,47.5) to [out = 180, in = 300] (72,49.5);
  \end{tikzpicture}
\\
\scriptsize (a) \hspace{0.5 cm} 
&
\scriptsize (b) 
\end{tabular}
\caption{{\small Graphical representations of (a) the two-lane example considered in Subsection~\ref{sec:heavy_two} and (b) the four-lane example in Subsection~\ref{sec:heavy_four}.}}
\label{fig:lanes_setting}
\end{figure}

\subsection{Two-lane example}\label{sec:heavy_two}
First consider an example with two lanes as depicted in Figure~\ref{fig:lanes_setting}(a). Due to the fixed cycle, both lanes operate as independent FCTL queues. The challenge, however, is to determine the optimal capacity allocation that dictates how the cycle time should be divided. In this example, we set the cycle length according to the sum of the green times and we choose an all-red or clearance time of $r_T$ slots.
We consider Poisson arrivals and geometric arrivals, both with a mean arrival rate of $0.4$ vehicles per slot. We further choose $r_T=5$ and consider several values of $c$. 
We determine the optimal $\beta_i$ according to the first-order dimensioning rule in \eqref{eq:sol1} and the refined rule in \eqref{dimrule}. In Table~\ref{t:newNumExampleGs} we display the optimal $\beta_i$ according to the two dimensioning rules together with the resulting green times. 
We see in Table~\ref{t:newNumExampleGs} that the green times only weakly depend on the distribution (Poisson or geometric). This, at least partly, relates to the scaling rule~\eqref{eq:ht_scaling} that we propose: if the mean arrival rate of two vehicle streams is the same (as is the case in this example), the only difference in the green time is caused by differences in the \emph{standard deviation} of the arrival processes and by the parameters $\beta_i$. The latter are the same for all flows under the dimensioning rule when using~\eqref{eq:sol1} and differ only slightly under the dimensioning rule for~\eqref{dimrule}.


\begin{table}[h]
	\centering
	{\small\begin{tabular}{c|cc|cc}
	    &   \multicolumn{2}{c}{Dimensioning rule~\eqref{eq:sol1}  }    & \multicolumn{2}{c}{Dimensioning rule~\eqref{dimrule}} \\ \hline
	$c$ & $g_1$ ($\beta_1$) & $g_2$ ($\beta_{2}$)  & $g_{1}^c$ ($\beta_{1}^c$) & $g_{2}^c$ ($\beta_{2}^c$)\\\hline
	30 & 12.46 (0.132) & 12.54 (0.132) & 12.46 (0.132) & 12.54 (0.132) \\
	50 & 22.29 (0.512) & 22.71 (0.512) & 22.29 (0.511) & 22.71 (0.513) \\
	100 & 46.87 (1.086) & 48.13 (1.086) & 46.84 (1.082) & 48.16 (1.090) \\
	200 & 96.03 (1.792) & 98.97 (1.792) & 95.92 (1.780) & 99.08 (1.803) \\
	500 & 243.5 (3.077) & 251.5 (3.077) & 243.1 (3.049) & 251.9 (3.101) \\\hline
	\end{tabular}
	\caption{{\small Optimal green times and $\beta_i$'s according to Proposition~\ref{thm:noCosts} (rule~\eqref{eq:sol1}) and Theorem~\ref{thm:noCostsHigherOrder}
			(rule~\eqref{dimrule}). For rule \eqref{dimrule} we use the notation  $\beta_i^c$ and $g_i^c$. We consider a Poisson arrival stream with mean $0.4$ at lane $1$, a geometric arrival stream at lane $2$ with mean $0.4$, and $r_T=5$. We study various values of $c$.}}
	\label{t:newNumExampleGs}}
\end{table}

\begin{table}[h]
	\centering
	{\small\begin{tabular}{c|cccc}
		$c$ & $\mathbb{E}[X_{g_1}^{(1)}]$ & Eq.~\eqref{eq:int_mean1} & $\mathbb{E}[X_{g_{1}^c}^{(1)}]$ & Eq.~\eqref{eq:int_meansqrtc} \\\hline
		$30$ & $11.53$ & $11.19$ & $11.53$ & $11.35$ \\
		$50$ & $2.396$ & $2.285$ & $2.402$ & $2.417$ \\
		$100$ & $0.6978$ & $0.6383$ & $0.7066$ & $0.7286$\\
		$200$ & $0.1686$ & $0.1431$ & $0.1742$ & $0.1887$ \\
		$500$ & $0.00609$ & $0.00412$ & $0.00666$ & $0.00960$\\\hline
		$c$ & $\mathbb{E}[X_{g_{2}}^{(2)}]$ & Eq.~\eqref{eq:int_mean1} & $\mathbb{E}[X_{g_{2}^c}^{(2)}]$ & Eq.~\eqref{eq:int_meansqrtc} \\\hline
		$30$ & $13.60$ & $13.24$ & $13.60$ & $13.52$ \\
		$50$ & $2.870$ & $2.704$ & $2.863$ & $2.923$ \\
		$100$ & $0.8577$ & $0.7553$ & $0.8500$ & $0.8930$ \\
		$200$ & $0.2156$ & $0.1693$ & $0.2104$ & $0.2343$ \\
		$500$ & $0.00865$ & $0.00488$ & $0.00801$ & $0.0124$ \\
	\end{tabular}
	\caption{{\small Exact values of the mean overflow queue with the green time based on Proposition~\ref{thm:noCosts}, $\mathbb{E}[X_{g_i}^{(i)}]$, and on Theorem~\ref{thm:noCostsHigherOrder}, $\mathbb{E}[X_{g_{i}^c}^{(i)}]$, respectively. The green times are randomized as in Remark~\ref{rem:nonintegeroptimal}. The table also displays an approximation of the mean overflow queue based on Equation~\eqref{eq:int_mean1} for the $g_i$ and an approximation based on Equation~\eqref{eq:int_meansqrtc} for the $g_{i}^c$. The results are for Poisson arrivals with mean $0.4$ at lane $1$, geometric arrivals with mean $0.4$ at lane $2$, and $r_T=5$. We study various values of $c$.}}
\label{t:newNumExampleEXg}}
\end{table}

The difference between the green times based on the first-order dimensioning rule and the refined dimensioning rule in Table~\ref{t:newNumExampleGs} is generally small. These small differences in the green-time allocations can be explained by the fact that the first-order approximation for the mean overflow queue is already sharp, see Table~\ref{t:newNumExampleEXg}, where we take the various green-time allocations as in Table~\ref{t:newNumExampleGs} while randomizing the green times as in Remark~\ref{rem:nonintegeroptimal}, and compute the exact value and approximations for the mean overflow queue. The minor differences in the green-time allocations in Table~\ref{t:newNumExampleGs} also lead to relatively small differences in the mean overflow queue as can be observed in Table~\ref{t:newNumExampleEXg}. The larger green times are allocated to the flow with the larger standard deviation of the number of arrivals per slot (and thus also to the flow with the larger mean overflow queue), which makes sense intuitively: if there is any excess green time, it should be allocated to the longest queue (within certain boundaries). We also found the optimal integer green-time allocation for the cases studied in Table~\ref{t:newNumExampleGs} and the optimal green times generally agree with the rounded values of the non-integer green times presented in Table~\ref{t:newNumExampleGs}, certainly when using~\eqref{eq:sol2}. Summarizing, both dimensioning rules yield results that are close to the optimum. As a last remark, we note that the first-order rule~\eqref{eq:sol1} is already a good way of dimensioning this two-lane intersection.

\begin{remark}[Comparison with Webster's approach~\cite{webster1958traffic}.]
	A well-known allocation scheme for green times at intersections with fixed settings is derived in~\cite{webster1958traffic}. The approximation scheme is derived using several approximations and yields ``optimal'' cycle lengths and green-time allocations. Our scheme, in contrast, does not provide an optimal cycle length, but rather offers a way to compute close-to-optimal green times \emph{given} a certain cycle length.
	
	When comparing the computation of the green times using Webster's method and ours for the optimal cycle length obtained by Webster, we note that the differences are generally small. In essence, Webster's method allocates green time proportionally to the arrival rate at each lane which corresponds to the $c\mu_i$ term in Equation~\eqref{eq:ht_scaling} (where we append the $i$ to represent the dependence on lane $i$). This term, certainly if $c$ grows, is the dominating factor in our green-time allocation.
	However, larger differences between Webster's method and ours are present when the second term in Equation~\eqref{eq:ht_scaling} is non-negligible compared to the first term. This is, for example, the case if the arrival distribution has a high standard deviation. It is thus important to take the variability of the arrival process into account when looking for optimal green times. 
In Appendix~\ref{app:webster} we provide a more thorough comparison between our method and Webster's.
\end{remark}

\subsection{Four-lane example with weights}\label{sec:heavy_four}

We next consider the influence of weights for an intersection with four approaches, see also Figure~\ref{fig:lanes_setting}(b), again assuming an all-red time of 5 slots. We have four signal phases and focus on the lane with the highest ratio of flow to saturation flow in each signal phase, which is a common approach (cf. \cite{webster1958traffic}).
We apply the dimensioning rule in \eqref{eq:approach2b} and obtain the optimal $\beta_i$ numerically (see Corollary~\ref{cor:weigthedNumerical}).
We take an arrival rate of 0.3 for lanes 1 and 2 and an arrival rate of 0.1 for lanes 3 and 4. The arrival processes vary: geometric arrivals in lane 1, Poisson arrivals in lane 2, and negative binomial arrivals for lanes 3 and 4. For these last two lanes, we increase the variability in the arrivals by selecting the parameters of the arrival distribution such that the variance is equal to 0.4 (compared to 0.1 if it had been a Poisson process).


We show results for equal weights $d_i=1$ in Table~\ref{t:newNumExample2EqualWeight} and unequal weights $d_i=i$ in Table~\ref{t:newNumExample2IncreasingWeight}. We display the green time and the optimal $\beta_i$ for each lane in both tables. With equal weights, the $\beta_i$ are the same and the difference in green times is solely due to differences in the mean and the standard deviation of the arrival process, see Table~\ref{t:newNumExample2EqualWeight}. With unequal weights, the $\beta_i$ increase with the weight $d_i$, as expected, although the influence of the weights on the green times remains limited as can be observed in Table~\ref{t:newNumExample2IncreasingWeight}. This makes sense, since the amount of green time that one can freely allocate is rather limited as well, especially for small $c$. E.g., if $c=30$, we only have one unit of green time to allocate freely (since we need $\mu_T c = 24$ for stabilizing all flows and $r_T$ is $5$). For larger $c$, however, we have more freedom to allocate green time, which leads to (slightly) bigger differences between the optimal green times. This can be verified by comparing the optimal green times for $c=500$ between Tables~\ref{t:newNumExample2EqualWeight} and \ref{t:newNumExample2IncreasingWeight}. When comparing the green times obtained by the two dimensioning rules with the actual optimal green times, computed by brute force, it turns out that these values always coincide with the values from Tables~\ref{t:newNumExample2EqualWeight} and \ref{t:newNumExample2IncreasingWeight}, except for a few cases where there is a single time slot difference due to rounding effects.

\begin{table}[h!]
	\centering
{\small \begin{tabular}{c|cccc}
	$c$ & $g_1$ ($\beta_1$) & $g_2$ ($\beta_2$) & $g_3$ ($\beta_3$) & $g_4$ ($\beta_4$) \\\hline
 30 & 9.256 (0.075) & 9.225 (0.075) & 3.260 (0.075) & 3.260 (0.075) \\
 50 & 16.281 (0.290) & 16.124 (0.290) & 6.298 (0.290) & 6.298 (0.290) \\
 100 & 33.844 (0.615) & 33.371 (0.615) & 13.893 (0.615) & 13.893 (0.615) \\
 200 & 68.969 (1.015) & 67.866 (1.015) & 29.083 (1.015) & 29.083 (1.015) \\
 500 & 174.343 (1.743) & 171.350 (1.743) & 74.653 (1.743) & 74.653 (1.743) \\
	\end{tabular}
\caption{{\small Dimensioning rule~\eqref{eq:approach2b}. Optimal green times and $\beta$'s for the four-lane example, with $r_T=5$ and $d_i=1$ for various values of $c$.}}
	\label{t:newNumExample2EqualWeight}}
	
\end{table}

\begin{table}[h!]
	\centering
	{\small\begin{tabular}{c|cccc}
	$c$ & $g_1$ ($\beta_1$) & $g_2$ ($\beta_2$) & $g_3$ ($\beta_3$) & $g_4$ ($\beta_4$) \\\hline
 30 & 9.166 (0.049) & 9.206 (0.069) & 3.291 (0.084) & 3.336 (0.097) \\
 50 & 15.842 (0.191) & 16.036 (0.268) & 6.454 (0.325) & 6.667 (0.373) \\
 100 & 32.627 (0.421) & 33.154 (0.576) & 14.336 (0.686) & 14.882 (0.772) \\
 200 & 66.538 (0.740) & 67.504 (0.969) & 29.989 (1.117) & 30.970 (1.226) \\
 500 & 169.911 (1.426) & 170.847 (1.702) & 76.332 (1.862) & 77.909 (1.973) \\
\end{tabular}
\caption{{\small Dimensioning rule~\eqref{eq:approach2b}. Optimal green times and $\beta$'s for the four-lane example, with $r_T=5$ and $d_i=i$ for various values of $c$.}}
	\label{t:newNumExample2IncreasingWeight}}
\end{table}


We have also computed the mean overflow queue $\mathbb{E}[X_{g_i}^{(i)}]$ for traffic flows $i=1,2,3,4$ using an exact analysis based on random green times (as described in Remark~\ref{rem:nonintegeroptimal}) and the refined approximation Eq.~\eqref{eq:int_meansqrtc}. The results, shown in Table~\ref{t:newNumExample2EXg}, confirm the high accuracy of the approximation. In Appendix~\ref{app:webster}, we compare our approach with Webster's method for the green-time allocation provided in Table~\ref{t:newNumExample2EqualWeight}.

\begin{table}[h]
	\centering
	{\small\begin{tabular}{c|cc|cc|cc|cc}
\multicolumn{9}{c}{Equal weights}\\
       \hline
		$c$ &  $\mathbb{E}[X_{g_1}^{(1)}]$ & Eq.~\eqref{eq:int_meansqrtc} & $\mathbb{E}[X_{g_2}^{(2)}]$ & Eq.~\eqref{eq:int_meansqrtc} & $\mathbb{E}[X_{g_3}^{(3)}]$ & Eq.~\eqref{eq:int_meansqrtc} & $\mathbb{E}[X_{g_4}^{(4)}]$ & Eq.~\eqref{eq:int_meansqrtc}\\
\hline
		$30$ &     21.422 & 21.158 & 18.805 & 18.492 & 22.192 & 22.304 & 22.192 & 22.304 \\
		$50$ &     5.572 & 5.571 & 4.829 & 4.829 & 6.151 & 6.447 & 6.151 & 6.447 \\
		$100$ &    2.455 & 2.483 & 2.129 & 2.132 & 2.945 & 3.191 & 2.945 & 3.191 \\
		$200$ &    1.181 & 1.206 & 1.011 & 1.025 & 1.559 & 1.737 & 1.559 & 1.737 \\
		$500$ &    0.303 & 0.317 & 0.254 & 0.263 & 0.482 & 0.590 & 0.482 & 0.590 \\
       \hline
\multicolumn{9}{c}{Increasing weights}\\
       \hline
		$c$ &  $\mathbb{E}[X_{g_1}^{(1)}]$ & Eq.~\eqref{eq:int_meansqrtc} & $\mathbb{E}[X_{g_2}^{(2)}]$ & Eq.~\eqref{eq:int_meansqrtc} & $\mathbb{E}[X_{g_3}^{(3)}]$ & Eq.~\eqref{eq:int_meansqrtc} & $\mathbb{E}[X_{g_4}^{(4)}]$ & Eq.~\eqref{eq:int_meansqrtc}\\
\hline
		$30$ &    33.803 & 33.490 & 20.619 & 20.295 & 19.652 & 19.781 & 16.884 & 17.038 \\
		$50$ &     9.456 & 9.451 & 5.349 & 5.374 & 5.342 & 5.636 & 4.476 & 4.786 \\
		$100$ &    4.595 & 4.604 & 2.385 & 2.397 & 2.481 & 2.722 & 2.025 & 2.274 \\
		$200$ &    2.405 & 2.421 & 1.141 & 1.151 & 1.265 & 1.445 & 1.017 & 1.197 \\
		$500$ &    0.660 & 0.675 & 0.281 & 0.291 & 0.379 & 0.484 & 0.302 & 0.404 \\
       \hline
	\end{tabular}
	\caption{{\small The mean overflow queue, exact and approximated with Eq.~\eqref{eq:int_meansqrtc}, for the four-lane example with weights.}}
\label{t:newNumExample2EXg}}
\end{table}

\section{Proof of Theorem~\ref{thm:ht_FCTL} using the transform method} \label{sec:proofFCTLconvergence}

In this section, we present the proof of Theorem~\ref{thm:ht_FCTL}, which we regard as the main mathematical novelty in this paper. The theorem shows weak convergence of the scaled overflow queue to a non-degenerate limit. The general proof structure is explained  in  Subsection~\ref{sec:proofHTFCTLidea} and executed in Subsection~\ref{sec:proofHTFCTL}.

\subsection{Sketch of the proof of Theorem~\ref{thm:ht_FCTL}}\label{sec:proofHTFCTLidea}

Let $X_g(w)$ denote the probability generating function (PGF) of the stationary overflow queue. In~\cite{boon2019pollaczek}, it is shown that there is an $\epsilon_0>0$ such that for all $\epsilon\in(0,\epsilon_0)$
\begin{align}\label{pk1}
&X_g(w)=
\exp\left(\frac{1}{2\pi i}\oint_{|z|=1+\epsilon}\frac{(Y'(z)z-Y(z))(w-Y(w))}{(z-Y(z))(z Y(w) -w Y(z))}
\ln\left(1-\frac{Y^c(z)}{z^g}\right)\mathrm{d}z\right),
\end{align}
for any $|w|<1+\epsilon$ with principal value of the logarithm and where $Y(z)$ is the PGF of the number of arrivals in a single slot. We switch to the moment generating function (MGF) by a change of variables, replacing $w$ by $\exp(t/(\sigma\sqrt{c}))$.

We will prove that the MGF of the FCTL overflow queue converges to the MGF of the $M_\beta$ given by, see~\cite{abate1993calculation},
\begin{equation}
\mathbb{E}(\e^{t M_\beta})=\exp\left(\frac{1}{2\pi i}\int_{\mathcal{C}}\frac{t}{u(t-u)}\ln\left(
1-\e^{-\beta u+\frac12 u^2}\right)\mathrm{d}u\right),
\end{equation}
where $t\in\mathbb{C}$ and $\mathcal{C}$ is a curve going from $-i\cdot\infty$ to $+i\cdot\infty$, passing $t$ to the right. We choose $\mathcal{C} : u=\beta+iv$, $-\infty<v<\infty$, and then we get for $\text{Re}(t)<\beta$ that
\begin{align}\label{eq75}
&\mathbb{E}(\e^{t M_\beta})=\exp\left(\frac{1}{2\pi}\int_{-\infty}^\infty\frac{t}{(\beta+iv)(t-\beta-iv)}\ln\left(
1-\e^{-\frac12\beta^2-\frac12 v^2}\right)\mathrm{d}v\right).
\end{align}
Then, we will prove that
\begin{equation}\label{eq76}
X_g(w)=\mathbb{E}\big(e^{tM_\beta}\big)\left(1+O\big(\frac{1}{\sqrt{c}}\big)\right)
\end{equation}
with $w=\exp(t/(\sigma\sqrt{c}))$, as $c\to\infty$, uniformly in $t$ in any bounded set contained in $\text{Re}(t)\leq\frac12\beta$, proving Equation~\eqref{eq:conv_Xg} in Theorem~\ref{thm:ht_FCTL}. We shall work from the integral
\begin{align}\label{eq77}
&I_c(w):=\frac{1}{2\pi i} \oint_{|z|=1+\epsilon}\frac{Y'(z)z-Y(z)}{z-Y(z)}\frac{w-Y(w)}{z Y(w) -w Y(z)}
\ln\left(1-\frac{Y^c(z)}{z^g}\right)\mathrm{d}z
\end{align}
with $w=\exp(t/(\sigma\sqrt{c}))$, see Equation~\eqref{pk1}, towards the integral
\begin{equation}
J(t):=\frac{1}{2\pi}\int_{-\infty}^\infty\frac{t}{(\beta+iv)(t-\beta-iv)}\ln\left(
1-\e^{-\frac12\beta^2-\frac12 v^2}\right)\mathrm{d}v,
\end{equation}
see Equation~\eqref{eq75}. We do this by using the dedicated saddle point method presented in~\cite{janssen2015novel} for the bulk service queue in heavy traffic. To avoid certain technical complications, we assume, as in~\cite{janssen2015novel}, that the maximum of $|Y(z)|$ over $z$, $|z|=r$, is uniquely achieved at $z=r$ for any $r\in(0,R)$. Under this assumption, see \cite[Sec. 3]{janssen2015novel}, the function
\begin{equation}\label{eq79}
h(z):=-\ln z+\frac{c}{g} \ln Y(z)
\end{equation}
has a unique saddle point $z_{sp}$ in $(1,R)$ with
\begin{equation}
h(z_{sp})<0=h'(z_{sp}), \quad h''(z_{sp})\to\frac{\sigma^2}{\mu}\qquad(c\to\infty),
\end{equation}
and such that $\text{Re}[h(z)]$, $|z|=z_{sp}$, is strictly maximal at $|z|=z_{sp}$. This saddle point converges to 1 as $c\to\infty$, and $z_{sp}<z_0$, where $z_0$ is the zero of $z^g-Y^c(z)$ outside the unit disk of smallest modulus. We shall take $1+\epsilon=z_{sp}$ in Equation~\eqref{eq77}. As $c\to\infty$, we have, due to rapid decay of $|Y^c(z)/z^g|$ along $|z|=z_{sp}$ from $z=z_{sp}$ onwards, that we may restrict the integration over $z$ in Equation~\eqref{eq77} to only a small portion of $|z|=z_{sp}$ near $z=z_{sp}\to1$. Also, we have $w=\exp(t/(\sigma\sqrt{c}))\to1$, $c\to\infty$, since $t$ is in a bounded set.

Our proof has the following main steps.

\begin{enumerate}
	\item[I.] Approximating the integral in Equation~\eqref{eq77}
	\begin{equation}\label{eq711}
	\frac{Y'(z)z-Y(z)}{z-Y(z)}\frac{w-Y(w)}{z Y(w) -w Y(z)} \text{ by }
	\frac{w-1}{(z-1)(w-z)}
	\end{equation}
	for $z$ and $w$ near 1.
	
	\item[II.] \label{step2} Substituting $z=z(x)$, $-\delta\leq x\leq\delta$ with $z(0)=z_{sp}$ to achieve that
	\begin{equation}\label{eq712}
	\frac{Y^c(z(x))}{(z(x))^g}=\exp\left(gh(z_{sp})-\frac12gh''(z_{sp})x^2\right)
	\end{equation}
	assumes the form of a Gaussian (steepest descent curve).
	
	\item[III.] Showing that
	\begin{equation}\label{eq713}
	gh(z_{sp}) \to -\frac12\beta^2,\, h''(z_{sp})\to\frac{\sigma^2}{\mu}
	\end{equation}
	as $c\to\infty$. Substituting $v=x\sqrt{gh''(z_{sp})}$, $-\delta\leq x\leq \delta$, we see from Equations~\eqref{eq712} and \eqref{eq713}, that we approximate
	\begin{equation}\label{eq:log}
	\ln\left(1-\frac{Y^c(z(x))}{(z(x))^g}\right) \text{ by } \ln\left(1- \e^{-\frac12\beta^2-\frac12 v^2}\right)
	\end{equation}
	as $c\to\infty$.
	
	\item[IV.] Showing that the combined effect on $(w-1)/\left((z-1)(w-z)\right)$ in Equation~\eqref{eq711} of the substitutions $z=z(x)$, $v=x\sqrt{gh''(z_{sp})}$ amounts to approximating
	\begin{equation}
	\frac{(w-1) \mathrm{d} x}{(z-1)(w-z)} \text{ by } \frac{t \mathrm{d} v}{(\beta+iv)(t-\beta-iv)},
	\end{equation}
	where $w=\exp(t/(\sigma\sqrt{c}))$ and $c\to\infty$.
	
	\item[V.] The completion of the proof of Equation~\eqref{eq76}.
\end{enumerate}


\subsection{Full proof of  Theorem~\ref{thm:ht_FCTL}}\label{sec:proofHTFCTL}

We shall next present the details for the five main steps.

\noindent\textbf{Step I.} We have in $|z-1|\leq \frac12(R-1)=:\delta$
\begin{equation}\label{eq717}
Y(z)=1+\mu(z-1)+O(|z-1|^2), Y'(z)=\mu+O(|z-1|),
\end{equation}
so that
\begin{align}
&z-Y(z)=(1-\mu)(z-1)(1+O(|z-1|),\\& z Y'(z)-Y(z)=-(1-\mu)(1+O(|z-1|)).
\end{align}
Therefore, in a set of $z$'s, $|z-1|\leq\delta_1$ with $\delta_1>0$,
\begin{equation}\label{eq719}
\frac{zY'(z)-Y(z)}{z-Y(z)}=\frac{-1}{z-1}\big(1+O(|z-1|)\big).
\end{equation}
We shall show below that for $|z-1|$, $|w-1|\leq \frac{1}{2}(R-1)=\delta$
\begin{align}
zY(w)-wY(z)&=(1-\mu)(z-w)(1+O(|z-1|+|w-1|)).\label{eq:23a}
\end{align}
Therefore, also using the first item in Equation~\eqref{eq717} with $w$ instead of $z$,
\begin{equation}\label{eq721}
\frac{w-Y(w)}{zY(w)-wY(z)}=\frac{(1-\mu)(w-1)(1+O(|w-1|))}{(1-\mu)(z-w)(1+O(|z-1|+|w-1|))}
\end{equation}
holds in a set of $z$'s and $w$'s, $|z-1|\leq\delta_2$ and $|w-1|\leq\delta_2$ with $\delta_2>0$. Combining Equations~\eqref{eq719} and \eqref{eq721}, we get
\begin{align}\label{eq722}
&\frac{Y'(z)z-Y(z)}{z-Y(z)}\frac{w-Y(w)}{z Y(w) -w Y(z)}=\frac{w-1}{(z-1)(w-z)}\big(1+O(|z-1|+|w-1|)\big),
\end{align}
holding in a set of $z$'s and $w$'s, $|z-1|\leq\delta_3$ and $|w-1|\leq\delta_3$ with $\delta_3>0$.

We finally show that Equation~\eqref{eq:23a} holds when $|z-1|$, $|w-1|\leq\delta$. We have
\begin{equation}
	Y(v) = 1+\mu(v-1)+\sum_{k=2}^\infty c_k(v-1)^k,
\end{equation}
for $|v-1|<\delta$ and where $\sum_{k=2}^\infty |c_k(v-1)^k|\leq \sum_{k=2}^\infty k|c_k|\delta^k<\infty$, and so
\begin{equation}\label{eq:26b}
	zY(w) - wY(z) = (1-\mu)(z-w)+\sum_{k=2}^\infty c_k\left(z(w-1)^k-w(z-1)^k\right).
\end{equation}
For $k=2,3,...$ we have
\begin{align}
	z&(w-1)^k-w(z-1)^k = (w-1)^k-(z-1)^k+(z-1)(w-1)\left((w-1)^{k-1}-(z-1)^{k-1}\right).
\end{align}
Using $a^n-b^n = (a-b)\sum_{i=0}^{n-1}a^i b^{n-1-i}$ with $a=w-1$, $b=z-1$ and $n=k,k-1$ we get
\begin{align}
\label{eq:26d}
	z&(w-1)^k-w(z-1)^k = (w-z)\left[\sum_{j=0}^{k-1}(w-1)^j(z-1)^{k-1-j}+\sum_{j=0}^{k-2}(w-1)^{j+1}z^{k-1-j}\right].
\end{align}
Let $m=\max\{|z-1|,|w-1|\}$. The modulus of the quantity within the $[]$ at the right-hand side of Equation~\eqref{eq:26d} is bounded by
\begin{equation}
	km^{k-1}+(k-1)m^k \leq \left(|z-1|+|w-1|\right)(k\delta^{k-2}+(k-1)\delta^{k-1})
\end{equation}
since $m\leq |z-1|+|w-1|$ and $|z-1|$, $|w-1|\leq \delta$. Therefore
\begin{align}\label{eq:26f}
	\Bigg|\sum_{k=2}^\infty& c_k\left(z(w-1)^k-w(z-1)^k\right)\Bigg|\leq\\& |z-w|\left(|z-1|+|w-1|\right)\sum_{k=2}^\infty |c_k|\left(k\delta^{k-2}+(k-1)\delta^{k-1}\right).
\end{align}
The infinite series at the right-hand side of Equation~\eqref{eq:26f} has a finite value and does not depend on $z,w$ when $|z-1|$, $|w-1|\leq\delta$. From this and Equation~\eqref{eq:26b} we get Equation~\eqref{eq:23a} for such $z,w$.

\noindent\textbf{Step II.} We have
\begin{equation}
\frac{Y^c(z)}{z^g}=\exp\big(gh(z)\big),
\end{equation}
with $h(z)$ given by Equation~\eqref{eq79}. We define $z=z(x)$ for real $x$ of small modulus by setting
\begin{equation}
h(z(x))=h(z_{sp})-\frac12x^2h''(z_{sp}).
\end{equation}
In \cite[Section 3]{janssen2015novel}, it is shown that there is a $\delta>0$, independent of $c\geq1$, such that $z(x)$ is given by a power series
\begin{equation}
z(x)=z_{sp}+ix+\sum_{k=2}^\infty c_k(ix)^k, \qquad |x|\leq \delta,
\end{equation}
with real $c_k$. Thus $z'(x)=i+O(|x|)$, which shows that the curve $(x, z(x))$ is tangent to the circle $|z|=z_{sp}$ at $z=z_{sp}$.

Substituting $z=z(x)$, $-\delta\leq x\leq\delta$, in Equation~\eqref{eq77} produces an approximation of $I_c(w)$ with exponentially small error. Note that $\mathrm{d} z=z'(x)\mathrm{d} x=\big(i+O(|x|)\big)\mathrm{d} x$. When we use, furthermore, Equation~\eqref{eq722}, we get
\begin{equation}\label{eq726}
I_c(w)=\frac{1}{2\pi} \int_{-\delta}^\delta \frac{w-1}{(z(x)-1)(w-z(x))}
\ln\left(1-\frac{Y^c(z(x))}{(z(x))^g}\right)(1+O)\mathrm{d}x,
\end{equation}
where $O$ abbreviates $O\big(|x|+|z(x)-1|+|w-1|\big)$. Note that $\frac{Y^c(z(x))}{(z(x))^g}$ is given by Equation~\eqref{eq712} in Gaussian form.

\noindent\textbf{Step III.} We have that $z_{sp}$ is the solution of $h'(z)=0$ with $z$ larger than, but close to, 1. From
\begin{equation}
0=h'(z_{sp})=a_1+a_2(z_{sp}-1)+\frac12a_3(z_{sp}-1)^2+\dots,
\end{equation}
where $a_i=h^{(i)}(1)$, we get
\begin{align}\label{eq728}
z_{sp}-1 &=\frac{-a_1/a_2}{1+a_3(z_{sp}-1)/2a_2+\ldots}-\frac{a_1}{a_2}+\frac{a_1a_3}{2a_2^2}(z_{sp}-1)+\dots \\
&= -\frac{a_1}{a_2}-\frac{a_3}{2a_2}\left(\frac{a_1}{a_2}\right)^2+\dots.
\end{align}
Next, from Equation~\eqref{eq728}, using $h(1)=0$, we get
\begin{align}
h(z_{sp}) &=a_1(z_{sp}-1)+\frac12a_2(z_{sp}-1)^2+\frac16a_3(z_{sp}-1)^3+\dots\\
&=-\frac{a_1^2}{2a_2}-\frac{a_3a_1^3}{6a_2^3}-\dots.
\end{align}
We express $a_i=h^{(i)}(1)$, $i=1,2,3$, in terms of $\mu$, $\sigma$, $\beta$ and $c$. We have
\begin{equation}
h'(z)=-\frac{1}{z}+\frac{c}{g}\frac{Y'(z)}{Y(z)},
\end{equation}
and so, from $g=c\mu+\beta\sigma\sqrt{c}$ and $Y(1)=1$, $Y'(1)=\mu$,
\begin{equation}
a_1=h'(1)=\frac{c\mu}{g}-1=-\beta\sigma\frac{\sqrt{c}}{g}=\frac{-\beta\sigma}{\mu\sqrt{c}} \left(1+O\Big(\frac{1}{\sqrt{c}}\Big)\right).
\end{equation}
Next,
\begin{equation}
h''(z)=\frac{1}{z^2}+\frac{c}{g}\frac{Y''(z)Y(z)-(Y'(z))^2}{(Y(z))^2},
\end{equation}
and so
\begin{align}
a_2=h''(1)&=1+\frac{1}{\mu}\left(1+O\Big(\frac{1}{\sqrt{c}}\Big)\right)\left(Y''(1)-\big(Y'(1)\big)^2\right)\\
&=\frac{1}{\mu}\left(Y''(1)+\mu-\mu^2\right)+O\Big(\frac{1}{\sqrt{c}}\Big)
=\frac{\sigma^2}{\mu}+O\Big(\frac{1}{\sqrt{c}}\Big).
\end{align}
In a similar fashion, $a_3=h'''(1)$ can be computed as a quantity that remains bounded as $c\to\infty$.

We then find, subsequently,
\begin{align}\label{eq734}
z_{sp}-1&=\frac{\beta}{\sigma\sqrt{c}}\left(1+O\Big(\frac{1}{\sqrt{c}}\Big)\right),\, h(z_{sp})=\frac{-\beta^2}{2c\mu}\left(1+O\Big(\frac{1}{\sqrt{c}}\Big)\right), \\
h''(z_{sp})&=h''(1)+O(z_{sp}-1)=\frac{\sigma^2}{\mu}\left(1+O\Big(\frac{1}{\sqrt{c}}\Big)\right).
\end{align}
It then follows that
\begin{equation}\label{eq736}
gh(z_{sp})=-\frac12\beta^2+O\Big(\frac{1}{\sqrt{c}}\Big),\, h''(z_{sp})=\frac{\sigma^2}{\mu}\left(1+O\Big(\frac{1}{\sqrt{c}}\Big)\right).
\end{equation}
For later use in Step IV, we also mention that
\begin{equation}\label{eq737}
\frac{\sqrt{gh''(z_{sp})}}{\sigma\sqrt{c}}=1+O\Big(\frac{1}{\sqrt{c}}\Big),\, (z_{sp}-1)\sqrt{gh''(z_{sp})}=\beta\left(1+O\Big(\frac{1}{\sqrt{c}}\Big)\right).
\end{equation}
Note that for $-\delta\leq x\leq\delta$ we have from Equation~\eqref{eq736}
\begin{align}\label{eq738}
\ln\left(1-\frac{Y^c(z(x))}{(z(x))^g}\right) &=\ln\left(1-\exp\big(gh(z_{sp})-\frac12gh''(z_{sp})x^2\big)\right)\\
&=\ln\left(
1-\e^{-\frac12\beta^2-\frac12 v^2}\right)\left(1+O\Big(\frac{1}{\sqrt{c}}\Big)\right),
\end{align}
where we have set $v=x\sqrt{gh''(z_{sp})}$.

\noindent\textbf{Step IV.} Let $t$ be in a bounded set with $\text{Re}(t)\leq \frac12\beta$. Then
\begin{equation}
w-1=\exp(t/(\sigma\sqrt{c}))-1=\frac{t}{\sigma\sqrt{c}}\left(1+O\Big(\frac{1}{\sqrt{c}}\Big)\right).
\end{equation}
With $z=z(x)=z_{sp}+ix+O(x^2)$, we have
\begin{align}\label{eq740}
&\frac{w-1}{(z-1)(w-z)}=\frac{t/(\sigma\sqrt{c})}{\big(z_{sp}-1+ix\big) \big(\frac{t}{\sigma\sqrt{c}}-(z_{sp}-1)-ix\big)}\left(1+O\Big(|x|+\frac{1}{\sqrt{c}}\Big)\right).
\end{align}
The relative error factor $1+O\Big(|x|+\frac{1}{\sqrt{c}}\Big)$ follows from Equation~\eqref{eq734} and $\text{Re}(t)\leq\frac12\beta$, so that
\begin{equation}
z_{sp}-1-\text{Re}\left(\frac{t}{\sigma\sqrt{c}}\right)\geq \frac{\beta}{2\sigma\sqrt{c}}\left(1+O\Big(\frac{1}{\sqrt{c}}\Big)\right).
\end{equation}
We next substitute $v=x\sqrt{gh''(z_{sp})}$. Writing
\begin{equation}
\gamma=\frac{\sqrt{gh''(z_{sp})}}{\sigma\sqrt{c}}, \,\eta=(z_{sp}-1)\sqrt{gh''(z_{sp})},
\end{equation}
we have uniformly in $x\in\mathbb{R}$
\begin{align}\label{eq743}
&\frac{t/(\sigma\sqrt{c})\,\mathrm{d}x}{\big(z_{sp}-1+ix\big) \big(\frac{t}{\sigma\sqrt{c}}-(z_{sp}-1)-ix\big)}\\&=\frac{\gamma t \mathrm{d}v}{(\eta+iv)(\gamma t-\eta-iv)}= \frac{t\mathrm{d}v}{(\beta+iv)(t-\beta-iv)}\left(1+O\Big(\frac{1}{\sqrt{c}}\Big)\right),
\end{align}
where we have used Equation~\eqref{eq737}.

\noindent\textbf{Step V.} We have by Equations~\eqref{eq726}, \eqref{eq738}, \eqref{eq740} and \eqref{eq743}
\begin{align}
I_c(w) =\frac{1}{2\pi}&\int_{-\Delta}^\Delta\left[\frac{t}{(\beta+iv)(t-\beta-iv)}\ln\left(
1-\e^{-\frac12\beta^2-\frac12 v^2}\right)\left(1+O\Big(\frac{1+|v|}{\sqrt{c}}\Big)\right)\right] \mathrm{d}v,
\end{align}
where $\Delta=\delta\sqrt{gh''(z_{sp})}$. For this it has been used that
\begin{align}
&|x|=\frac{|v|}{\sqrt{gh''(z_{sp})}}=O\left(\frac{|v|}{\sqrt{c}}\right), \\&
|z(x)-1|\leq |z_{sp}-1|+O(|x|)=O\left(\frac{1+|v|}{\sqrt{c}}\right).
\end{align}
Finally, since
\begin{equation}
\ln\left(1-\e^{-\frac12\beta^2-\frac12 v^2}\right)= O\left(e^{-\frac12v^2}\right)
\end{equation}
while $\Delta=\delta\sigma\sqrt{c}\left(1+O\Big(\frac{1}{\sqrt{c}}\Big)\right) \to\infty$ like $\sqrt{c}$, we get that
\begin{align}
&I_c(w) =\frac{1}{2\pi}\int_{-\infty}^\infty\frac{t}{(\beta+iv)(t-\beta-iv)}\ln\left(
1-\e^{-\frac12\beta^2-\frac12 v^2}\right)\mathrm{d}v \left(1+O\Big(\frac{1}{\sqrt{c}}\Big)\right).
\end{align}
That is, $I_c(w)=J(t)\left(1+O\Big(\frac{1}{\sqrt{c}}\Big)\right)$, and this holds uniformly in $t$ in any bounded set with $\text{Re}(t)\leq \frac12\beta$, finishing the proof of Equation~\eqref{eq76}.\\

Turning to Equation~\eqref{eq:conv_moments} in Theorem~\ref{thm:ht_FCTL}, we have for the MGF's $F_c$ and $F$ in Equation~\eqref{eq76}
\begin{equation}
F_c(t)=\sum_{k=0}^\infty \frac{m_k(c)}{k!}t^k, \quad F(t)=\sum_{k=0}^\infty \frac{m_k}{k!}t^k,
\end{equation}
where $m_k(c)$ and $m_k$ are the $k^\text{th}$ moment of $X_g/(\sigma\sqrt{c})$ and $M_\beta$, respectively. By Cauchy's integral formula for the $k^\text{th}$ derivative at 0 of an analytic function, we have
\begin{equation}
\frac{m_k(c)}{k!}=\frac{1}{2\pi i}\oint_{|t|=a}\frac{F_c(t)}{t^{k+1}}\mathrm{d}t,
\end{equation}
where we take $a>0$ such that the disk $|t|\leq a$ is contained in the set of $t$'s where the convergence in Equation~\eqref{eq76} is uniform. 
Since $F_c(t)=F(t)\left(1+O\Big(\frac{1}{\sqrt{c}}\Big)\right)$ uniformly on $|t|=a$, this yields $m_k(c)=m_k\left(1+O\Big(\frac{1}{\sqrt{c}}\Big)\right)$, proving Equation~\eqref{eq:conv_moments} in Theorem~\ref{thm:ht_FCTL}.\\

To prove Equation~\eqref{eq:conv_prob} in Theorem~\ref{thm:ht_FCTL}, we must argue differently. Letting $t\to-\infty$ in Equation~\eqref{eq75}, we have
\begin{equation}
\mathbb{P}(M_\beta=0)=\exp\left(\frac{1}{2\pi}\int_{-\infty}^\infty \frac{1}{\beta+iv}\ln\Big(1-e^{-\frac12\beta^2-\frac12v^2}\Big)\mathrm{d}v\right).
\end{equation}
Also, setting $w=0$ in Equation~\eqref{pk1}, we have that the front factor in the integral in Equation~\eqref{pk1} is given by
\begin{equation}
\frac{Y'(z)z-Y(z)}{z-Y(z)}\cdot\frac{-1}{z}=\frac{1}{z-1}\Big(1+O\big(|z-1|\big)\Big),
\end{equation}
where $Y(0)>0$ and Equation~\eqref{eq719} have been used. We are now in a completely similar, and indeed even simpler, situation as before:
\begin{align}
&\mathbb{P}\left(\frac{1}{\sigma\sqrt{c}}X_g=0\right)=\exp\left(\frac{1}{2\pi i}\oint_{|z|=1+\epsilon}\frac{1+O(|z-1|)}{z-1}\ln\Big(1-\frac{Y^c(z)}{z^g}\Big)\mathrm{d}z\right).
\end{align}
The combined effect on the front factor of the two substitutions $z=z(x)$ and $v=x\sqrt{gh''(z_{sp})}$ amounts to
\begin{equation}
\frac{1}{i}\frac{\mathrm{d} z}{z-1}=\frac{\mathrm{d} v}{\beta+iv}\left(1+O\Big(\frac{1}{\sqrt{c}}\Big)\right),
\end{equation}
and this yields $\mathbb{P}(X_g/(\sigma\sqrt{c})=0)=\mathbb{P}(M_\beta=0)\left(1+O\Big(\frac{1}{\sqrt{c}}\Big)\right)$.

\begin{remark}\label{rem:noninteger}
	The randomness in $G_i$ as introduced in Remark~\ref{rem:nonintegeroptimal} has a minor impact on the proof of Theorem~\ref{thm:ht_FCTL}. We need to modify Equation~\eqref{eq712} slightly and multiply the left-hand side of Equation~\eqref{eq712} with $1/(p +(1-p)z(x))$ with $p$ as in Remark~\ref{rem:nonintegeroptimal}. Observe that
	\begin{equation} \label{eq:randomGreen}
		1/(p +(1-p)z(x))=1+O\left(z(x)-1\right)
	\end{equation}
	uniformly in $p$ for $0\leq p \leq 1$. As the right-hand side of Equation~\eqref{eq712} is smaller than $1$, see Equation~\eqref{eq713}, we may take the factor in Equation~\eqref{eq:randomGreen} out of the logarithm in \eqref{eq:log}. In this way, the proof of Theorem~\ref{thm:ht_FCTL} still works with the only further modification  that Equation~\eqref{eq726} gets an additional $O\left(z(x)-1\right)$ term from Equation~\eqref{eq:randomGreen}.
\end{remark}

\section{Conclusion}\label{sec:con}

The main technical novelty in this paper concerns establishing heavy-traffic limits for the single-lane FCTL queue, in particular Theorem~\ref{thm:ht_FCTL}. These heavy-traffic limits follow from combining a suitable large-cycle regime \eqref{eq:ht_scaling} with the transform method for establishing convergence in distribution of the stationary overflow to a nondegenerate limit. We are able to use this transform method thanks to the recently obtained  contour-integral expression in~\cite{boon2019pollaczek} for the probability generating function of the overflow. The proof that exploits this transform method is presented in Section~\ref{sec:proofFCTLconvergence} and interesting in its own right. The key technical novelty, the asymptotic expansion of the complex contour integral, is likely to be of broader interest and not limited to the FCTL queue. Examples where this proof method applies include the bulk service queue~\cite{boon2019pollaczek}, extensions of the FCTL queue considered in~\cite{oblakova2019exact}, and stochastic economic lot-scheduling problems~\cite{winands2011stochastic}.

The limiting heavy-traffic behavior is governed by a reflected Gaussian random walk with negative drift, a stochastic process for which many results are available in the literature. This gives heavy-traffic approximations that reduce the complexity of the (pre-limit) expressions for the mean overflow queue in the FCTL queue considerably. These limiting results enable us to formulate easy-to-calculate approximations and allow us to solve capacity allocation problems in the form of optimization problems that generate (close-to-optimal) green times. This adds to the literature of capacity allocation problems
\cite{kleinrock1976queueing,wein1989capacity} and asymptotic dimensioning of queueing systems~\cite{borst2004dimensioning,van2019economies}.

In some practical situations, it might be beneficial to have non-static signaling strategies, such as vehicle-actuated strategies. Some generalizations of the results to non-deterministic cycle times and green times are possible. Under appropriate adaptations of Equation~\eqref{eq:ht_scaling} and certain restrictions on the variance, similar heavy-traffic results can be established as the ones derived in this paper. Another example is vehicle-actuated signaling, where the green times depend on the queue lengths. An example would be that, instead of a fixed green time, we introduce a maximum green time and switch to the next queue as soon as either the queue empties or the maximum green time is reached. The corresponding model is higher-dimensional (as opposed to the one-dimensional FCTL queue) and a theoretical analysis similar to the one conducted here is therefore not possible. Nevertheless, simulations in~\cite{timmerman2020new} show that the same scaling as in rule~\eqref{eq:ht_scaling} leads to similar asymptotic results for vehicle-actuated settings.\\


\paragraph{Acknowledgements}

The work in this paper is supported by the Netherlands Organization for Scientific Research (NWO) under grant number 438-13-206. The work of JvL is further supported by an NWO Vici grant. The funding resource had no involvement in the design and execution of the study.


%
\newpage
\appendix
\section{Remaining proofs}\label{sec:remaining}
 We now provide the proofs of Proposition~\ref{thm:mean} and Theorem~\ref{thm:noCostsHigherOrder}.

\subsection*{Proof of the heavy-traffic approximation for the mean queue length}\label{sec:proofApprox}
\begin{proof} We start with an outline of the proof of Proposition~\ref{thm:mean}. The expression for $\mathbb{E}[X_{g}]$ is close to the expression for the mean overflow queue in the BSQ. In \cite[Sections 4 and 5]{janssen2015novel}, a similar type of result is obtained for the BSQ and we will provide a proof that works along the same lines. In fact, we rewrite and approximate the expression $\mathbb{E}[X_{g}]$ in such a way that we are able to reuse the proof (and subsequent results) of Theorem 3 from Equation (4.3) onward in Section 4 of~\cite{janssen2015novel}. As a last step, we simplify the expressions obtained in~\cite{janssen2015novel} slightly.

We start the proof with an expression for $\mathbb{E}[X_{g}]$. Equation (3.1) of~\cite{boon2019pollaczek} reads
\begin{equation}
\mathbb{E}[X_g] = \frac{1}{2\pi i}\oint_{|z|=1+\epsilon}\frac{Y(z)-zY^\prime(1)}{Y(z)-z}\frac{\left(z^g-Y(z)^c\right)^\prime}{z^g-Y(z)^c}\mathrm{d}z,
\end{equation}
for some $\epsilon > 0$. We define, as before,
\begin{equation}
h(z) = -\ln z +\frac{c}{g}\ln Y(z) .
\end{equation}
Then we are able to derive (following the same steps as in the proof of Lemma 1 in~\cite{janssen2015novel})
\begin{align}
\mathbb{E}[X_g] & = \frac{1}{2\pi i}\oint_{|z|=1+\epsilon}\frac{Y(z)-zY^\prime(1)}{Y(z)-z}\frac{g z^{g-1}-c Y(z)^{c-1}Y^\prime(z)}{z^g-Y(z)^c}\mathrm{d}z \\
& = \frac{1}{2\pi i}\oint_{|z|=1+\epsilon}\left[\frac{Y(z)-zY^\prime(1)}{Y(z)-z} \left(\frac{g}{z}-\frac{g}{z}\left(\frac{c}{g}\frac{zY^\prime(z)}{Y(z)}-1\right)\frac{z^{-g}Y(z)^c}{1-z^{-g}Y(z)^c}\right)\right]\mathrm{d}z\\
& = \frac{g}{2\pi i} \oint_{|z|=1+\epsilon} h^\prime(z)\frac{Y(z)-zY^\prime(1)}{z-Y(z)} \frac{\exp(g h(z))}{1-\exp(gh(z))}\mathrm{d}z,
\end{align}
where in the last step we use that
\begin{align}
& h^\prime(z) = \frac{c}{g}\frac{Y^\prime(z)}{Y(z)}-\frac{1}{z},\\ &\oint_{|z|=1+\epsilon}\frac{Y(z)-zY^\prime(1)}{Y(z)-z}\frac{g}{z}\mathrm{d}z = 0.
\end{align}

We let $z_{sp}$ denote the unique minimum of $h(z)$ with $z\geq 1$ and we let
\begin{equation}
z(x) = z_{sp} + i x +\sum_{k=2}^\infty c_k (ix)^k
\end{equation}
solve the equation
\begin{equation}
h(z(x)) = h(z_{sp})-\frac{1}{2}x^2h^{\prime\prime}(z_{sp})=: q(x).
\end{equation}
Then, following the same steps as are taken in Section 3 of~\cite{janssen2015novel}, we get that with exponentially small error
\begin{equation}\label{eq:int}
\mathbb{E}[X_g] = -\frac{gh^{\prime\prime}(z_{sp})  }{2\pi i} \int_{-1/2\delta}^{1/2\delta} x\frac{Y(z(x))-z(x)Y^\prime(1)}{z(x)-Y(z(x))} \frac{\exp(gq(x))}{1-\exp(gq(x))} \mathrm{d}x
\end{equation}
for some $\delta>0$.

Proceeding as in the proof of Theorem 3 of~\cite{janssen2015novel}, we obtain, since $z(-x) = \overline{z(x)}$ for real $x$, where $\overline{a}$ denotes the complex conjugate of $a$, that
\begin{align}
&x\frac{Y(z(x))-z(x)Y^\prime(1)}{z(x)-Y(z(x))} - x \frac{Y(z(-x))-z(-x)Y^\prime(1)}{z(-x)-Y(z(-x))} =\frac{-2 ix^2\left(1+O(z_{sp}-1)+x^2\right)}{(z_{sp}-1)^2+x^2-2c_2(z_{sp}-1)x^2},
\end{align}
for $|x|\leq 1/2\delta$ and where $c_2\in\mathbb{R}$. This implies that, using the previous result together with Equation~\eqref{eq:int}  and extending the integration range from to $(-\infty, \infty)$ while using symmetry of $q(x)$, we get, with exponentially small error, that
\begin{align}
\mathbb{E}[X_g] = & \frac{gh^{\prime\prime}(z_{sp})}{\pi}\int_{0}^\infty \left[\frac{x^2\left(1+O(z_{sp}-1)+x^2\right)}{(z_{sp}-1)^2+x^2-2c_2(z_{sp}-1)x^2}\frac{\exp(gq(x))}{1-\exp(gq(x))}\right] \mathrm{d}x,
\end{align}
so we are now exactly in the same situation as that of Sections 4 and 5.1 of~\cite{janssen2015novel}. Here it should be noted that the FCTL relation $g=c\mu + \beta\sigma\sqrt{c}$, see Equation~\eqref{eq:ht_scaling}, can be written in the BSQ form of~\cite{janssen2015novel}, $c/g=(1-\gamma/\sqrt{g})/\mu$ with
\begin{equation}\label{eq:gamma}
\gamma = \frac{\beta\sigma}{\sqrt{\mu}}\left(1+\frac{\beta\sigma}{\mu\sqrt{c}}\right)^{-1/2}.
\end{equation}
Hence, letting
\begin{equation}\label{eq:b02}
b_{0}^2 = b(\beta)^2=\frac{\gamma^2\mu}{2\sigma^2}=\frac{1}{2}\beta^2\left(1+\frac{\beta\sigma}{\mu\sqrt{c}}\right)^{-1},
\end{equation}
see Equation~(4.12) of~\cite{janssen2015novel}, we get with an absolute error of order $1/\sqrt{c}$,
\begin{align}
\mathbb{E}[X_g] =& \frac{\sigma}{\pi}\sqrt{\frac{2g}{\mu}} \label{eq:mean_long} G_0(b_0)+\frac{\sigma}{\pi}\sqrt{\frac{2}{\mu}}\left((C_1+C_3)G_0(b_0)-(C_2+b_{0}^2C_3)G_3(b_0)+C_4G_4(b_0)\right),
\end{align}
according to Equation (5.14) of~\cite{janssen2015novel}, with
\begin{align}
	G_3(b) & = \int_{0}^\infty \frac{t^2}{(b^2+t^2)^2}\frac{\e^{-b^2-t^2}}{1-\e^{-b^2-t^2}}\mathrm{d}t\\
	G_4(b) & =\frac{t^2}{b^2+t^2}\frac{\e^{-b^2-t^2}}{(1-\e^{-b^2-t^2})^2}\mathrm{d}t.
\end{align}



We proceed by computing the $C_i$ explicitly. From~\cite{janssen2015novel}, Equations~(5.2-5.5), (5.8), and (5.9), we get
\begin{align}
    C_1&=-\frac{\gamma(\sigma^2-\mu)}{2\sigma^2},\\
    C_2&=\frac{\gamma(\sigma^2-\mu)}{\sigma^2}b_{0}^2.
\end{align}
Furthermore, from~\cite{janssen2015novel}, Equations (5.2-3), (5.5-6), and (5.10), we get
\begin{equation}\label{eq:C3}
    C_3 = -\frac{1}{3} \gamma a \frac{\mu^2}{\sigma^4},
\end{equation}
while from~\cite{janssen2015novel}, Equations (5.2-3), (5.7), and (5.11), we get
\begin{equation}\label{eq:C4}
    C_4=-\gamma \frac{\sigma^2-\mu}{\sigma^2}b_{0}^2+\frac{1}{3}\gamma a \frac{\mu^2}{\sigma^4}b_{0}^2.
\end{equation}
In Equations~\eqref{eq:C3} and \eqref{eq:C4}, $a$ is given by, see~\cite{janssen2015novel}, Equation (5.3),
\begin{align}
    a=a_3(s=\infty) & = - 2+\frac{Y^{\prime\prime\prime}(1)}{Y^\prime(1)}-3Y^{\prime\prime}(1)+2\left(Y^\prime(1)\right)^2\\
    & = \frac{1}{\mu}\left(\mu_3-\mu^3-3(1+\mu)\sigma^2\right),
\end{align}
where $\mu_3=\mathbb{E}[Y^3]$, as in Equation~\eqref{eq:a}.

It follows that
\begin{align}
    & C_1+C_3 = \frac{1}{2b_{0}^2}C_4,\\
    & C_2+b_{0}^2C_3 = -C_4.
\end{align}
When we also use Equations (5.17) and (4.27) from~\cite{janssen2015novel}, we get
\begin{align}
    G_3(b_0)+G_4(b_0) & = \frac{1}{2b_{0}^2} G_2(b_0),\\
    G_0(b_0)+G_2(b_0) & = G_1(b_0),
\end{align}
with
\begin{align*}
	G_2(b) = \int_{0}^\infty \frac{b^2}{b^2+t^2}\frac{\e^{-b^2-t^2}}{1-\e^{-b^2-t^2}}\mathrm{d}t,
\end{align*}
and find
\begin{equation}
    (C_1+C_3)G_0(b_0)-(C_2+b_{0}^2C_3)G_3(b_0)+C_4G_4(b_0) = (C_1+C_3)G_1(b_0).
\end{equation}
Therefore we get, with an absolute error of order $1/\sqrt{c}$,
\begin{equation}
    \mathbb{E}[X_g] = \frac{\sigma}{\pi} \sqrt{\frac{2g}{\mu}}G_0(b_0)+\frac{\sigma}{\pi} \sqrt{\frac{2}{\mu}}(C_1+C_3)G_1(b_0).
\end{equation}

Finally, we have from $g=c\mu+\beta\sigma \sqrt{c}$ and Equations~\eqref{eq:C3} and \eqref{eq:C4} that

\begin{equation}
    \frac{\sigma}{\pi} \sqrt{\frac{2g}{\mu}} = \frac{\sqrt{2}}{\pi} \sigma \sqrt{c}\left(1+\frac{\beta\sigma}{\mu \sqrt{c}}\right)^{1/2},
\end{equation}
and
\begin{align}
    \frac{\sigma}{\pi}\sqrt{\frac{2}{\mu}}(C_1+C_3) & = \frac{\sqrt{2}}{\pi}\frac{\gamma\sigma}{2\sqrt{\mu}}\left(-\frac{\sigma^2-\mu}{\sigma^2}+\frac{1}{3}a\frac{\mu^2}{\sigma^4}\right)\\
    & = \frac{\sigma^2b(\beta)}{\pi\mu} \left(\frac{\mu}{\sigma^2}+\frac{1}{3}a \frac{\mu^2}{\sigma^4}-1\right),\label{eq:120}
\end{align}
where in Equation~\eqref{eq:120} also Equations~\eqref{eq:gamma} and \eqref{eq:b02} have been used. Therefore, with $\theta$ as given in Equation~\eqref{eq:theta}, we get
\begin{equation}
    \mathbb{E}[X_g] = \frac{\sqrt{2}}{\pi}\sigma\left(1+\frac{\beta\sigma}{\mu\sqrt{c}}\right)^{1/2}G_0(b(\beta))+\frac{\sqrt{2}}{\pi}\theta b(\beta)G_1(b(\beta))+O\left(\frac{1}{\sqrt{c}}\right).
\end{equation}
The expression in Equation~\eqref{eq:int_meansqrtc} is then obtained by noting that
\begin{align}
    &\left(1+\frac{\beta\sigma}{\mu\sqrt{c}}\right)^{1/2} = 1+\frac{\beta\sigma}{2\mu\sqrt{c}}+O\left(\frac{1}{c}\right),\\
    &b(\beta) = \frac{\beta}{\sqrt{2}}+O\left(\frac{1}{\sqrt{c}}\right),
\end{align}
finishing the proof of Proposition~\ref{thm:mean}.

\end{proof}

\subsection*{Proof of optimal green-time allocation using Equation~\eqref{eq:int_meansqrtc}}\label{sec:proofAlg2}

\begin{proof}
	We use the Lagrange multiplier technique to prove Theorem~\ref{thm:noCostsHigherOrder}. To start, we differentiate Equation~\eqref{eq:approach1b}
\begin{align}
\frac{\partial}{\partial \beta_j}&\sum_{i=1}^n\left( \frac{\sqrt{2}}{\pi}\left(\sigma_i\sqrt{c}+\frac{1}{2}\beta_i\frac{\sigma_{i}^2}{\mu_i}\right)G_0(b_i(\beta_i))+\frac{\theta_i\beta_i}{\pi}G_{1}\left(\frac{\beta_i}{\sqrt{2}}\right)\right)=\\
&  \frac{1}{\pi\sqrt{2}}\frac{\sigma_{j}^2}{\mu_j} G_0(b_j(\beta_j))+\frac{\sqrt{2}}{\pi}\left(\sigma_j\sqrt{c}+\frac{\beta_j\sigma_{j}^2}{2\mu_j}\right)b_{j}^\prime(\beta_j)G_{0}^\prime(b_j(\beta_j))\\
& + \frac{\theta_j}{\pi}G_1\left(\frac{\beta_j}{\sqrt{2}}\right)+\frac{\theta_j\beta_j}{\pi\sqrt{2}}G_{1}^\prime\left(\frac{\beta_j}{\sqrt{2}}\right)=\\
& \frac{\sigma_j\sqrt{c}}{\pi}G_{0}^\prime\left(\frac{\beta_j}{\sqrt{2}}\right)+\frac{1}{\pi} \left\{ \frac{\sigma_{j}^2}{\sqrt{2}\mu_j}G_{0}\left(\frac{\beta_j}{\sqrt{2}}\right)-\frac{\beta_j\sigma_{j}^2}{2\mu_j}G_{0}^\prime\left(\frac{\beta_j}{\sqrt{2}}\right)\right.\\&\left.-\frac{\beta_{j}^2\sigma_{j}^2}{2\sqrt{2}\mu_j}G_{0}^{\prime\prime}\left(\frac{\beta_j}{\sqrt{2}}\right)+\theta_jG_{1}\left(\frac{\beta_j}{\sqrt{2}}\right)+\frac{\theta_j\beta_j}{\sqrt{2}}G_{1}^\prime\left(\frac{\beta_j}{\sqrt{2}}\right)\right\}+O\left(\frac{1}{\sqrt{c}}\right),
\end{align}
where we have used/approximated that
\begin{align}
b(\beta_j) &=\frac{\beta_j}{\sqrt{2}}-\frac{\beta_{j}^2\sigma_j}{2\sqrt{2}\mu_j\sqrt{c}}+O\left(\frac{1}{c}\right),\\
 G_0(b_{j}(\beta_j)) &= G_0\left(\frac{\beta_j}{\sqrt{2}}\right)+O\left(\frac{1}{\sqrt{c}}\right),\\
 \left(\sigma_j\sqrt{c}+\frac{\beta_j\sigma_{j}^2}{2\mu_j}\right)b_{j}^\prime(\beta_j) &= \frac{\sigma_j\sqrt{c}}{\sqrt{2}}-\frac{\beta_j\sigma_{j}^2}{2\sqrt{2}\mu_j}+O\left(\frac{1}{\sqrt{c}}\right),\\
 G_{0}^\prime(b_{j}(\beta_j)) &= G_{0}^\prime\left(\frac{\beta_j}{\sqrt{2}}\right)-\frac{\beta_{j}^2\sigma_j}{2\sqrt{2}\mu_j\sqrt{c}}G_{0}^{\prime\prime}\left(\frac{\beta_{j}}{\sqrt{2}}\right)+O\left(\frac{1}{c}\right).
\end{align}

So, introducing a Lagrange multiplier $\lambda_1 \in\mathbb{R}$ and ignoring the $O$-terms, we need to solve
\begin{align}
&\lambda_1 \sigma_j\sqrt{c}=\frac{\sigma_j\sqrt{c}}{\pi}G_{0}^\prime\left(\frac{\beta_j}{\sqrt{2}}\right)+\frac{1}{\pi} \left\{ \frac{\sigma_{j}^2}{\sqrt{2}\mu_j}G_{0}\left(\frac{\beta_j}{\sqrt{2}}\right)-\frac{\beta_j\sigma_{j}^2}{2\mu_j}G_{0}^\prime\left(\frac{\beta_j}{\sqrt{2}}\right)\right.\\&\left.-\frac{\beta_{j}^2\sigma_{j}^2}{2\sqrt{2}\mu_j}G_{0}^{\prime\prime}\left(\frac{\beta_j}{\sqrt{2}}\right)+\theta_jG_{1}\left(\frac{\beta_j}{\sqrt{2}}\right)+\frac{\theta_j\beta_j}{\sqrt{2}}G_{1}^\prime\left(\frac{\beta_j}{\sqrt{2}}\right)\right\},	
\end{align}
for $j=1,...,n$. The second term on the right-hand side of the former equation is $O(1)$ and it is fair to expect that the optimal $\beta_j$ in Theorem~\ref{thm:noCostsHigherOrder} are close to the optimal solution in Theorem~\ref{thm:noCosts} in the sense that $\beta_j=\beta_*+O(1/\sqrt{c})$. Therefore, we approximate
\begin{align}
  K_j = &  \frac{\sigma_{j}^2}{\sqrt{2}\mu_j}G_{0}\left(\frac{\beta_*}{\sqrt{2}}\right)-\frac{\beta_*\sigma_{j}^2}{2\mu_j}G_{0}^\prime\left(\frac{\beta_*}{\sqrt{2}}\right)-\frac{\beta_{*}^2\sigma_{j}^2}{2\sqrt{2}\mu_j}G_{0}^{\prime\prime}\left(\frac{\beta_*}{\sqrt{2}}\right)+\\&\theta_jG_{1}\left(\frac{\beta_*}{\sqrt{2}}\right)+\frac{\theta_j\beta_*}{\sqrt{2}}G_{1}^\prime\left(\frac{\beta_*}{\sqrt{2}}\right),
\end{align}
where $K_{j}$ is as in Equation~\eqref{eq:k0i}, at the expense of an error $O(1/\sqrt{c})$. After rewriting, we then get
\begin{align}
\frac{1}{\pi}G_{0}^\prime\left(\frac{\beta_j}{\sqrt{2}}\right) = \lambda_1-\frac{K_{j}}{\pi\sigma_j\sqrt{c}}.
\end{align}
We develop, using Equation~\eqref{eq:lagrange},
\begin{align}
\frac{1}{\pi}G_{0}^\prime\left(\frac{\beta_j}{\sqrt{2}}\right) & = \frac{1}{\pi} G_{0}^\prime\left(\frac{\beta_*}{\sqrt{2}}\right)+\frac{1}{\pi\sqrt{2}}(\beta_j-\beta_*)G_{0}^{\prime\prime}\left(\frac{\beta_*}{\sqrt{2}}\right)+O\left(\frac{1}{c}\right)\\
& =  \lambda_0 + \frac{1}{\pi\sqrt{2}}(\beta_j-\beta_*)G_{0}^{\prime\prime}\left(\frac{\beta_*}{\sqrt{2}}\right)+O\left(\frac{1}{c}\right).
\end{align}
Combining the last two results, we get
\begin{align}
\frac{1}{\pi\sqrt{2}}(\beta_j-\beta_*)G_{0}^{\prime\prime}\left(\frac{\beta_*}{\sqrt{2}}\right)=\lambda_1-\lambda_0-\frac{K_{j}}{\pi\sigma_j\sqrt{c}}+O\left(\frac{1}{c}\right).
\end{align}
Deleting the $O(1/c)$ term, we find that
\begin{align}
\beta_j = \beta_* + \frac{\lambda_1-\lambda_0-\frac{K_{j}}{\pi\sigma_j\sqrt{c}}}{\frac{1}{\pi\sqrt{2}}G_{0}^{\prime\prime}\left(\frac{\beta_*}{\sqrt{2}}\right)}.
\end{align}

Using the equality constraint, we readily see that the following should hold
\begin{align}
\sum_{j=1}^n \sigma_j \left(\lambda_1-\lambda_0-\frac{K_{j}}{\pi\sigma_j\sqrt{c}}\right)=0,
\end{align}
implying that
\begin{align}
\lambda_1-\lambda_0 = \frac{1}{\pi\sqrt{c}}\frac{\sum_{j=1}^n K_{j}}{\sum_{j=1}^n \sigma_j}.
\end{align}
We thus obtain
\begin{align}
\beta_i=\beta_* +\sqrt{\frac{2}{c}}\frac{1}{G_{0}^{\prime\prime}\left(\frac{\beta_*}{\sqrt{2}}\right)}\left(\frac{\sum_{j=1}^n K_{j}}{\sum_{j=1}^n\sigma_j}-\frac{K_{i}}{\sigma_i}\right),
\end{align}
concluding the proof.
\end{proof}
%
%
%
%
%

%
%

\section{Comparison with Webster}\label{app:webster}

In this appendix, we compare our approach to the classical approach by Webster \cite{webster1958traffic}. In his seminal paper, Webster gives an approximation formula for the mean delay at traffic intersections with static signaling. 
He also discusses briefly how to optimally allocate capacity to the different flows. One of the key arguments, which we tacitly adopt in our paper, is that it suffices to consider only the busiest approach in each phase, i.e. the approach with the highest ratio of flow to saturation flow. Webster's approximation for the mean delay is given by
\begin{equation}\label{eqn:webster}
\mathbb{E}[D] \approx \frac{(c-g)^2}{2c(1-\rho)}  + \frac{\rho c^2}{2 g ( g - \mu c)}
- 0.65 \left( \frac{c}{\mu^2} \right)^{1/3}
\left( \frac{\mu c}{g} \right)^{2+5 g/c} .
\end{equation}
Note that Webster's delay formula has no term that accounts for the variability in the arrival process: It seems that this approximation implicitly assumes Poisson arrivals. 
The unit of delay is the length of a time slot, which is the interdeparture time of two consecutive delayed vehicles (approximately two seconds, on average).

The \emph{exact} delay of a fixed-cycle traffic light queue has been determined in \cite{van2006delay}. It turns out that the mean delay $\E[D]$ can be easily expressed in terms of the mean overflow queue $\E[X_g]$:
\begin{equation}\label{eqn:meandelay}
\mathbb{E}[D] = \frac{c-g}{2c\mu(1-\mu)}\left( \frac{\sigma^2}{1-\mu}+(c-g)\mu+2\E[X_g]\right).
\end{equation}

As Webster notices, it is generally suggested (in particular in engineering handbooks) that the optimal green time allocation is such that the green period lengths are in proportion to their corresponding ratios of flow. Indeed, with a cycle of length $c$ and a total all-red time of $r_T$, we get the following capacity allocation rule:
\begin{equation}\label{eqn:websterCapacityAllocation}
g_i = \frac{\mu_i}{\sum_{j=1}^n\mu_j}(c-r_T).
\end{equation}
In \cite{webster1958traffic}, Webster applies this rule in all of his numerical examples. We revisit the four-lane example from Subsection \ref{sec:heavy_four} and compare our results with those obtained by applying Webster's approach. Since Webster does not consider weights in his objective functions, we take the setting where $d_1=\dots=d_4=1$. Distributing the available green time according to \eqref{eqn:websterCapacityAllocation} 
results in the green times tabulated in Table~\ref{tbl:websterCapacity}. Note that the green times in lanes 1 and 2 are the same, due to the fact that 
Webster does not take into account the differences in variability of the arrival processes. Moreover, the vehicle-to-capacity ratio $\rho$ is the same for all lanes. The differences with Table~\ref{t:newNumExample2EqualWeight} appear to be relatively small, but the impact on the mean delays can be quite substantial, in particular when the intersection is operating close to its maximum capacity as in this example. Since Webster focuses on the mean delay, rather than on the overflow queue, we use Equation~\eqref{eqn:meandelay} to convert the values in (the upper part of) Table~\ref{t:newNumExample2EXg} to exact and approximated mean delays. The result is shown in (the upper part of) Table~\ref{t:newNumExample2EqualWeighDelays}. The last column of this table contains the mean delay of an arbitrary vehicle, which is the weighted average (with weights $\mu_i/\sum_{j=1}^n \mu_j$) of the delays per lane.

\begin{table}[h!]
	\centering
\small \begin{tabular}{c|cccc|c}
	$c$ & $g_1$ & $g_2$ & $g_3$ & $g_4$ & $\rho$        \\\hline
	30  &   9.375 & 9.375 & 3.125 & 3.125 & 0.960 \\
	50  &   16.875 & 16.875 & 5.625 & 5.625 & 0.889 \\
	100 &   35.625 & 35.625 & 11.875 & 11.875 & 0.842 \\
	200 &   73.125 & 73.125 & 24.375 & 24.375 & 0.821 \\
	500 &   185.625 & 185.625 & 61.875 & 61.875 & 0.808 \\
	\end{tabular}
	\caption{{\small Optimal green times according to capacity allocation rule~\eqref{eqn:websterCapacityAllocation}, for the four-lane intersection with $r_T=5$ and $d_i=1$ for various values of $c$.}}
\label{tbl:websterCapacity}
\end{table}
We now compare the performance of dimensioning rule \eqref{eq:approach2b} with Webster's settings. For this purpose, we compute the exact mean delay for an intersection with green times as in Table~\ref{tbl:websterCapacity}. Furthermore, we also approximate the mean delay using Webster's formula \eqref{eqn:webster}. The results are given in the lower part of Table~\ref{t:newNumExample2EqualWeighDelays}, again adding a column with the mean delay of an arbitrary vehicle. It becomes clear that our dimensioning rule ensures more fairness by improved balancing of the mean waiting times: flows 3 and 4 have the highest waiting times, but these are much smaller compared to Webster's schedule. Regarding the mean waiting time of an arbitrary vehicle, our scheme performs significantly better, in particular in the (more realistic) cases where the cycle times are less than 200 time units (approximately 400 seconds). A final conclusion from Table~\ref{t:newNumExample2EqualWeighDelays} is that Webster's approximation for the mean delay (Eq. \eqref{eqn:webster}) is not as accurate as our refined approximation \eqref{eq:int_meansqrtc}, but still remarkably accurate for traffic flows with Poisson arrivals (flow 2, in our example). 
\begin{table}[h!]
	\centering
{\small \begin{tabular}{c|cc|cc|cc|c|}
\multicolumn{8}{c}{Capacity allocation rule \eqref{eq:approach2b}}\\
\hline
	$c$ & $\E[D_1]$ & Eq.~\eqref{eq:int_meansqrtc} & $\E[D_2]$ & Eq.~\eqref{eq:int_meansqrtc} & $\E[D_3], \E[D_4]$ & Eq.~\eqref{eq:int_meansqrtc} & $\E[D]$ \\\hline
	30  &   81.697 & 80.828 & 72.996 & 71.963 & 235.232 & 236.339 & 116.818 \\
	50  &   35.031 & 35.028 & 32.666 & 32.665 & 83.112 & 85.993 & 46.164 \\
	100 &   39.872 & 39.962 & 39.144 & 39.154 & 71.492 & 73.852 & 47.504 \\
	200 &   65.872 & 65.950 & 66.210 & 66.253 & 98.059 & 99.753 & 74.045 \\
	500 &   153.307 & 153.349 & 155.766 & 155.794 & 207.683 & 208.695 & 167.823 \\
\hline
\multicolumn{8}{c}{Capacity allocation rule \eqref{eqn:websterCapacityAllocation}}\\
\hline
	$c$ & $\E[D_1]$ & Eq.~\eqref{eqn:webster} & $\E[D_2]$ & Eq.~\eqref{eqn:webster} & $\E[D_3], \E[D_4]$ & Eq.~\eqref{eqn:webster} & $\E[D]$ \\\hline
	30  &   57.380 & 44.631 & 45.974 & 44.631 & 484.747 & 120.117 & 159.944 \\
	50  &   26.768 & 24.066 & 23.571 & 24.066 & 166.618 & 49.216 & 60.531 \\
	100 &   33.983 & 33.571 & 32.458 & 33.571 & 125.701 & 56.633 & 56.341 \\
	200 &   59.490 & 59.764 & 58.718 & 59.764 & 143.194 & 93.903 & 80.127 \\
	500 &   142.130 & 141.797 & 141.858 & 141.797 & 247.776 & 216.589 & 168.440 \\
	\hline
	\end{tabular}
	\caption{{\small 
Mean delays for a four-lane example with geometric arrivals with mean 0.3 in lane 1, Poisson arrivals with mean 0.3 in lane 2, negative binomial arrivals with mean 0.1 and variance 0.4 in lanes 3 and 4, with $r_T=5$ and $d_i=1$ for various values of $c$.}}
	\label{t:newNumExample2EqualWeighDelays}}
\end{table}

\end{document}